%
%
%
%
%
\RequirePackage{fix-cm}
\documentclass[smallextended]{svjour3}       
\smartqed  
\usepackage{graphicx}
\usepackage{amssymb}
\usepackage{amsmath}
\usepackage{tikz}
\usepackage{textcomp}
%
%
%
%
%
\begin{document}

\title{Cauchy formulae and Hardy spaces in discrete octonionic analysis 
}


\author{Rolf S\"oren Krau\ss har \and Dmitrii Legatiuk}

\authorrunning{R.S. Krau\ss har, D. Legatiuk} 

\institute{Rolf S\"oren Krau\ss har \at
	       University of Erfurt \\
		   Chair of Mathematics \\
           Nordh\"auser Str. 63, 99089 Erfurt, Germany \\
           \email{soeren.krausshar@uni-erfurt.de}    
           \and
		   Dmitrii Legatiuk \at
           University of Erfurt \\
		   Chair of Mathematics \\
           Nordh\"auser Str. 63, 99089 Erfurt, Germany \\
           \email{dmitrii.legatiuk@uni-erfurt.de}
}

\date{Received: date / Accepted: date}
\dedication{This paper is dedicated to John Ryan on the occasion of his retirement}

\maketitle

\begin{abstract}
In this paper, we continue the development of a fundament of discrete octonionic analysis that is associated to the discrete first order Cauchy-Riemann operator acting on octonions. In particular, we establish a discrete octonionic version of the Borel-Pompeiu formula and of Cauchy's integral formula. The latter then is exploited to introduce a discrete monogenic octonionic Cauchy transform. This tool in hand allows us to introduce discrete octonionic Hardy spaces for upper and lower half-space together with Plemelj projection formulae.

\keywords{Octonions \and Discrete Dirac operator \and Discrete monogenic functions \and Discrete octonionic function theory \and Discrete Cauchy transform \and Hardy spaces}
\subclass{39A12 \and 42A38 \and 44A15}
\end{abstract}

\section{Introduction}

Complex analytic tools together with their related discretisation methods provide a very powerful toolkit to compute numerically and some cases even analytically the solutions to many partial differential equations with given boundary data that arise in the context of harmonic analysis. In the focus there are null-solutions to a discrete version of the Cauchy-Riemann operator that satisfy a discretised version of the Cauchy integral and the Borel-Pompeiu formula. These integral formulas provide us with the key ingredients in the representation of the solutions.\par
Now, there are several possibilities to generalise complex function theory together with discretisations to higher dimensional settings. Many branches of engineering particularly focus on three-dimensional generalisations. To address the three-dimensional setting the function theory related to associative Clifford algebras has proven to be a very effective tool. In this context one considers a three-dimensional Cauchy-Riemann operator defined by $D := \sum\limits_{i=0}^2 \partial_{x_i} \mathbf{e}_i$ where $\mathbf{e}_1$ and $\mathbf{e}_2$ are two different imaginary units generating the Clifford algebra $\mathcal{C}\ell_{0,2}$ which is spanned as an $\mathbb{R}$-vector space by the basis elements $1,\mathbf{e}_1,\mathbf{e}_2,\mathbf{e}_1 \mathbf{e}_2$. Actually the particular case $\mathcal{C}\ell_{0,2}$ represents the Hamiltonian quaternions which for a non-commutative skew-field.\par
Using the multiplicative structure of the Clifford algebra, the generalised first order Cauchy-Riemann operator factorises the second-order Laplacian in $\mathbb{R}^3$. In fact, this approach could be generalised very easily to all associative Clifford algebras. Its function theory now is broadly known under the term {\itshape Clifford analysis} while functions in the kernel of the generalised Cauchy-Riemann operator are often called monogenic, hyperholomorphic  or Clifford holomorphic functions. Classical references are for instance the books \cite{BDS,GS2} as well as also J. Ryan's edited volume containing the milestone contribution \cite{McIntosh} among others which all provided a boost in the development of this function theory including applications to Calderon-Zygmund type operators. Particularly, \cite{GS2} exhibits how related integral operators, in particular Plemelj-Sokhotzkij type formulas related to the Cauchy transform, can successfully be applied to treat boundary value problems. Additionally, J. Ryan and his co-authors have also addressed unbounded domains with this function theoretical toolkit in \cite{GKR}. \par
Over the last two decades there has been a growing interest in the discretisation of the continuous Clifford analysis for developing numerical algorithms for the higher dimensional associative framework, cf. for instance  \cite{Brackx_1,Cerejeiras_1,CKKS,Cerejeiras_2,Cerejeiras,Faustino_1,FKS07,Guerlebeck_1} among others. However, instead of embedding the vector space $\mathbb{R}^{n+1}$ into associative Clifford algebras $\mathcal{C}\ell_{0,n}$, which have zero-divisors for $n>2$, there are many more ways to generalise complex analysis to higher dimensions. If the Cayley-Dickson duplication process to the complex numbers is applied, then we first arrive at the four-dimensional Hamiltonian quaternions, which, as mentioned is still a particular case of a Clifford algebra. However, if we apply, as the next step, the Cayley-Dickson doubling to the quaternions, then we obtain a different sort of  algebra, namely the {\itshape octonions} or {\itshape Cayley numbers}, denoted by $\mathbb{O}$ in all that follows, cf. for instance \cite{Baez} among others. The octonions turn out to be non-associative. Consequently they do not belong to the Clifford algebras and they are not representable with matrices. But, in contrast to the eight-dimensional Clifford algebra $\mathcal{C}\ell_{0,3}$ octonions still form a normed division algebra (in the wider sense of being non-associative without zero-divisors). So, every non-zero element has a multiplicative inverse. According to the famous theorem of Hurwitz, they form the largest real normed zero divisor-free algebra over $\mathbb{R}$.\par
Surprisingly, although the octonions are not associative, according to recently published research results in particle physics, see for instance \cite{Burdik,G,NSM}, they seem to offer a much more adequate model for a unified description of particle physics including gravity, which was also already proposed in the book by F. G\"ursey and H. Tze in 1996, see \cite{GTBook}. This provides one further motivation to develop analytic and discretised function theoretic tools in the non-associative context of octonions. On the continuous level at least the fundaments of an octonionic function theory are already well-developed, see for example \cite{DS,Nono,XL2000,XL2002}. In \cite{XLT2008} generalisations of the Cauchy transform together with Plemelj projection formulas and with some basic applications to Calderon-Zygmund type operators have been presented particularly. Recently one also managed to introduce meaningful octonionic generalisations of Bergman and Hardy spaces on the continuous level, see for example  \cite{WL2018,WL2020,ConKra2021,CKS2023}, as well as the papers \cite{FL,QR2021} addressing octonionic Hilbert spaces on a more general level.\par
Nevertheless, if we want to solve practical octonionic boundary value problems numerically, then we are in need of discretised versions of these octonionic operators. As far as we know, the development of a discrete octonionic function theory is still a rather open research field. In our recent paper \cite{Krausshar_1}, we developed a fundament for research in this direction, namely, we introduced a discretised version of octonionic Cauchy-Riemann operator in terms of appropriate forward and backward operators in the Hermitian sense, and established a discrete Stokes' formula for that operator.\par
In this paper, we depart from the discretised Stokes' formula and exploit that one further in order to obtain a discrete octonionic version of Borel-Pompeiu's formula. The latter in turn produces a discrete Cauchy formula in a special case. As a next step, we establish a discretised version of the octonionic Cauchy transform that has been introduced on the continuous level in \cite{XLT2008}. The related Plemelj-Sokhotzkij projection formulae then give rise to the definition of a discrete octonionic monogenic generalisation of Hardy spaces. In particular, the focus of this paper is put on the consideration of the upper octonionic half-space (resp. its related upper half-lattice) where the last component satisfies $x_7>0$. In this case, we have a simple geometric description of the inner product that has to be used here. An alternative way to introduce discrete octonionic monogenic Hardy spaces consists in defining appropriate extension operators using the Fourier transform on the complexified octonions and its related Fourier symbols, which in our framework are also complexified octonions. Also this approach will be carefully developed  in this paper as it has the advantage of being fully explicit. \par
As a consequence of the lack of associativity, the signs in the discrete octonionic Borel-Pompeiu and Cauchy differ from the results in discrete Clifford analysis. Surprisingly, the discretisation of the continuous octonionic analysis that we develop here has the interesting effect that the associator expressions that we intrinsically have in the Stokes formula of the continuous case (see \cite{XLT2008}), disappear in our discretised constructions. This effect also shows up when considering instead of the forward and backward operator its centralised version to which we also pay a particular attention in this paper to get a more complete view on this particular phenomenon.\par
The discrete octonionic setting thus turns out to be really different as well from its associative version in Clifford analysis but also from continuous octonionic monogenic function theory. It exposes new intrinsic peculiarities that have to be taken really carefully into account.\par

\section{Preliminaries and notations}\label{Pre}

\subsection{Continuous octonionic analysis}

In this section, we briefly recall basic notions and the most fundamental results on continuous octonionic analysis. The basic context is the $8$-dimensional Euclidean vector space $\mathbb{R}^{8}$, where the standard vectors are denoted by $\mathbf{e}_{k}$, $k=0,1,\ldots,7$. A vector from $\mathbb{R}^8$ can be expressed as usual in terms of its real coordinates in the way  $\mathbf{x}=(x_{0}, x_{1},\ldots, x_{7})$. Next, $\mathbb{R}^8$-vectors can also be described as octonions 
\begin{equation*}
x = x_{0}\mathbf{e}_{0}+x_{1}\mathbf{e}_{1}+x_{2}\mathbf{e}_{2}+x_{3}\mathbf{e}_{3}+x_{4}\mathbf{e}_{4}+x_{5}\mathbf{e}_{5}+x_{6}\mathbf{e}_{6}+x_{7}\mathbf{e}_{7},
\end{equation*}
where we now additionally identify $\mathbf{e}_{4}=\mathbf{e}_{1}\mathbf{e}_{2}$, $\mathbf{e}_{5}=\mathbf{e}_{1}\mathbf{e}_{3}$, $\mathbf{e}_{6}=\mathbf{e}_{2}\mathbf{e}_{3}$ and $\mathbf{e}_{7}=\mathbf{e}_{4}\mathbf{e}_{3}=(\mathbf{e}_{1}\mathbf{e}_{2})\mathbf{e}_{3}$. Moreover, we have $\mathbf{e}_{i}^{2}=-1$ and  $\mathbf{e}_{0}\mathbf{e}_{i}=\mathbf{e}_{i}\mathbf{e}_{0}$ for all $i=1,\ldots,7$, and $\mathbf{e}_{i}\mathbf{e}_{j}=-\mathbf{e}_{j}\mathbf{e}_{i}$ for all mutual distinct $i,j\in\left\{1,\ldots,7\right\}$, as well as $\mathbf{e}_{0}$ is the neutral element and, therefore, often will be omitted. This definition endows $\mathbb{R}^8$ additionally with a multiplicative closed structure. \par
Table~\ref{Table_octonions} fully describes the multiplication rules for real octonions. This table clearly indicates in particular that the octonionic multiplication actually is closed but not associative, for instance we have $(\mathbf{e}_{i}\mathbf{e}_{j})\mathbf{e}_{k}=-\mathbf{e}_{i}(\mathbf{e}_{j}\mathbf{e}_{k})$.\par
\begin{table}[h]
\caption{Multiplication table for real octonions $\mathbb{O}$}
\label{Table_octonions}
\begin{center}
\begin{tabular}{|c|cccccccc|}
\hline
$\cdot$ & $\mathbf{e}_{0}$ & $\mathbf{e}_{1}$ & $\mathbf{e}_{2}$ & $\mathbf{e}_{3}$ & $\mathbf{e}_{4}$ & $\mathbf{e}_{5}$ & $\mathbf{e}_{6}$ & $\mathbf{e}_{7}$ \\[2pt]
\hline
$\mathbf{e}_{0}$ & $1$ & $\mathbf{e}_{1}$ & $\mathbf{e}_{2}$ & $\mathbf{e}_{3}$ & $\mathbf{e}_{4}$ & $\mathbf{e}_{5}$ & $\mathbf{e}_{6}$ & $\mathbf{e}_{7}$ \\
$\mathbf{e}_{1}$ & $\mathbf{e}_{1}$ & $-1$ & $\mathbf{e}_{4}$ & $\mathbf{e}_{5}$ & $-\mathbf{e}_{2}$ & $-\mathbf{e}_{3}$ & $-\mathbf{e}_{7}$ & $\mathbf{e}_{6}$ \\
$\mathbf{e}_{2}$ & $\mathbf{e}_{2}$ & $-\mathbf{e}_{4}$ & $-1$ & $\mathbf{e}_{6}$ & $\mathbf{e}_{1}$ & $\mathbf{e}_{7}$ & $-\mathbf{e}_{3}$ & $-\mathbf{e}_{5}$ \\
$\mathbf{e}_{3}$ & $\mathbf{e}_{3}$ & $-\mathbf{e}_{5}$ & $-\mathbf{e}_{6}$ & $-1$ & $-\mathbf{e}_{7}$ & $\mathbf{e}_{1}$ & $\mathbf{e}_{2}$ & $\mathbf{e}_{4}$ \\
$\mathbf{e}_{4}$ & $\mathbf{e}_{4}$ & $\mathbf{e}_{2}$ & $-\mathbf{e}_{1}$ & $\mathbf{e}_{7}$ & $-1$ & $-\mathbf{e}_{6}$ & $\mathbf{e}_{5}$ & $-\mathbf{e}_{3}$ \\
$\mathbf{e}_{5}$ & $\mathbf{e}_{5}$ & $\mathbf{e}_{3}$ & $-\mathbf{e}_{7}$ & $-\mathbf{e}_{1}$ & $\mathbf{e}_{6}$ & $-1$ & $-\mathbf{e}_{4}$ & $\mathbf{e}_{2}$ \\
$\mathbf{e}_{6}$ & $\mathbf{e}_{6}$ & $\mathbf{e}_{7}$ & $\mathbf{e}_{3}$ & $-\mathbf{e}_{2}$ & $-\mathbf{e}_{5}$ & $\mathbf{e}_{4}$ & $-1$ & $-\mathbf{e}_{1}$ \\
$\mathbf{e}_{7}$ & $\mathbf{e}_{7}$ & $-\mathbf{e}_{6}$ & $\mathbf{e}_{5}$ & $-\mathbf{e}_{4}$ & $\mathbf{e}_{3}$ & $-\mathbf{e}_{2}$ & $\mathbf{e}_{1}$ & $-1$ \\[2pt]
\hline
\end{tabular}
\end{center}
\end{table}\par
Here, we use the same labelling of the basis elements as used in \cite{Baez}. Additionally, we also need to consider the octonionic conjugate which is given by 
\begin{equation*}
\overline{x} = x_{0}\mathbf{e}_{0}-x_{1}\mathbf{e}_{1}-x_{2}\mathbf{e}_{2}-x_{3}\mathbf{e}_{3}-x_{4}\mathbf{e}_{4}-x_{5}\mathbf{e}_{5}-x_{6}\mathbf{e}_{6}-x_{7}\mathbf{e}_{7}.
\end{equation*}
The Euclidean norm from $\mathbb{R}^8$ then is reproduced by $x \cdot \overline{x}=\sum\limits_{j=0}^7 x_i^2 = |x|^2$, so every non-zero octonion $x$ is invertible via $x^{-1} = \dfrac{\overline{x}}{|x|^2}$.\par
While there are several possibilities to extend the classical function theory to octonions, we recall here the definition in the sense of the Riemann-approach, following the classical development of P. Dentoni and M. Sce \cite{DS}, K. Nono \cite{Nono}, the school of Xingmin-Li and Zhong Peng, see for instance \cite{XL2000} and others. In this sense we recall: 
\begin{definition}[Octonionic monogenicity]
Let $U \subseteq \mathbb{O}$ be an open set. A real-differentiable function $f:U \to \mathbb{O}$ is called left (right) octonionic monogenic if ${\cal{D}} f = 0$ (esp. $f {\cal{D}} = 0$). Here, ${\cal{D}}:= \dfrac{\partial }{\partial x_0} + \sum\limits_{i=1}^7 \mathbf{e}_i \dfrac{\partial }{\partial x_i}$ is the octonionic first order Cauchy-Riemann operator. If $f$ satisfies $\overline{{\cal{D}}}f = 0$ (resp. $f\overline{\cal{D}} = 0$), then we call $f$ left (right) octonionic anti-monogenic.
\end{definition}\par
Octonionic analysis has two crucial differences to classical Clifford analysis:
\begin{itemize}
\item[(i)] Octonionic analysis considers functions from $\mathbb{R}^8$ back into $\mathbb{R}^8$, while Clifford analysis addresses null-solutions to the Cauchy-Riemann operator defined on the paravector space $\mathbb{R}\oplus\mathbb{R}^7$ with values in the Clifford algebra $\mathcal{C}\ell_7$. $\mathcal{C}\ell_7$ however is a real vector space being isomorphic to $\mathbb{R}^{128}$.
\item[(ii)] Left(right) octonionic monogenic functions do neither form a right nor a left $\mathbb{O}$-module. See for example J. Kauhanen and H. Orelma in \cite{Kauhanen_3} for concrete examples. This fact significantly complicates the  development of a consistent function theory and the theory of generalised Hilbert function spaces in octonionic settings, see also \cite{CKS2023,ConKra2021,QR2021}.
\end{itemize}\par
Additionally, the lack of associativity leads to modifications of the classical integral formulae, such as for example the Stokes' formula \cite{XLT2008}:
\begin{equation}
\label{Stokes_formula_continuous}
\begin{array}{c}
\displaystyle \int\limits_{\partial G} g(x) \; (d\sigma(x)  f(x)) = \\
\displaystyle \int\limits_G \Bigg(   
g(x)({\cal{D}} f(x)) + (g(x){\cal{D}})f(x)  - \sum\limits_{j=0}^7 [e_j, {\cal{D}}g_j(x),f(x)]
\Bigg) dV,
\end{array}
\end{equation}
Here, the expression $[a,b,c] := (ab)c - a(bc)$ called the {\itshape associator} appears. This is an intrinsic feature, which is cancelled out in the cases of associativity. It is important to remark, that although the associator appears in most of octonionic constructions, it is nevertheless possible to introduce specific structures, where the associator would vanish. For example, it has been pointed out in \cite{XL2000}, that considering the two functions being octonionic monogenic and Stein-Weiss conjugate harmonics, i.e. $\dfrac{\partial g_j}{\partial x_i} = \dfrac{\partial g_i}{\partial x_j}$ for all $0\leq i<j\leq 7$, the associator will vanish.\par
Further, a generalisation of the Cauchy's integral formula to octonionic setting has been presented, see for instance  \cite{Nono,XL2002}:  
\begin{proposition}[Cauchy's integral formula]\label{cauchy1}
Let $U \subseteq \mathbb{O}$ be open and $G \subseteq U$ be an $8$-D compact oriented manifold with a strongly Lipschitz boundary $\partial G$. If $f\colon U \to \mathbb{O}$ is left octonionic monogenic, then for all $x \in G$
\begin{equation*}
f(x)= \frac{3}{\pi^4} \int\limits_{\partial G} q_{\bf 0}(y-x) \Big(d\sigma(y) f(y)\Big).
\end{equation*}
\end{proposition}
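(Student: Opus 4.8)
The plan is to derive the formula from the continuous Stokes formula \eqref{Stokes_formula_continuous} by excising the singularity of the Cauchy kernel and passing to a limit. First I would identify $q_{\mathbf{0}}(y-x)$ as the fundamental solution of $\mathcal{D}$, normalised so that $q_{\mathbf{0}}(z) = \dfrac{\overline{z}}{|z|^8}$, and record two facts. On the one hand, writing $q_{\mathbf{0}} = \overline{\mathcal{D}}\Phi$ with $\Phi(z) = -\tfrac{1}{6}|z|^{-6}$ harmonic away from the origin, we get $\mathcal{D}q_{\mathbf{0}} = \mathcal{D}\overline{\mathcal{D}}\Phi = \Delta\Phi = 0$ and likewise $q_{\mathbf{0}}\mathcal{D} = 0$ for $z \ne 0$, so the kernel is both left and right octonionic monogenic off the diagonal. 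On the other hand, the scalar constant $\tfrac{3}{\pi^4}$ equals $1/|S^7|$, the reciprocal surface area of the unit sphere in $\mathbb{R}^8$; this is precisely the normalisation needed for the limit to reproduce $f(x)$.

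Next I would fix $x \in G$, take $\varepsilon > 0$ small enough that $\overline{B_\varepsilon(x)} \subset G$, and apply \eqref{Stokes_formula_continuous} on the perforated domain $G_\varepsilon := G \setminus \overline{B_\varepsilon(x)}$ with $g(y) := q_{\mathbf{0}}(y-x)$. On $G_\varepsilon$ the first two volume integrands vanish identically because $\mathcal{D}f = 0$ and $q_{\mathbf{0}}\mathcal{D} = 0$. The associator term is the genuinely octonionic obstacle and must be shown to vanish as well. Expanding $\mathcal{D}g_j = \sum_k \mathbf{e}_k \partial_k g_j$ and using that the associator is alternating, the sum $\sum_{j} [\mathbf{e}_j, \mathcal{D}g_j, f]$ collapses to $\sum_{1 \le j < k \le 7} (\partial_k g_j - \partial_j g_k)\,[\mathbf{e}_j, \mathbf{e}_k, f]$, since every triple containing the unit $\mathbf{e}_0 = 1$ has vanishing associator. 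The remaining antisymmetrised derivatives vanish because the imaginary part of $q_{\mathbf{0}}$ is a gradient field, i.e. $g_k = \partial_k(\tfrac{1}{6}|z|^{-6})$, so the Stein--Weiss conjugacy relations $\partial_k g_j = \partial_j g_k$ hold among the imaginary indices --- exactly the scenario flagged after \eqref{Stokes_formula_continuous}. Hence the whole right-hand side vanishes on $G_\varepsilon$, and, accounting for the reversed orientation of the inner sphere, $\int_{\partial G} q_{\mathbf{0}}(y-x)(d\sigma(y)f(y)) = \int_{\partial B_\varepsilon(x)} q_{\mathbf{0}}(y-x)(d\sigma(y)f(y))$.

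Finally I would evaluate the small-sphere integral as $\varepsilon \to 0$. Writing $d\sigma(y) = n(y)\,dS(y)$ with outward normal $n(y) = (y-x)/\varepsilon$ on $\partial B_\varepsilon(x)$, the integrand equals $\varepsilon^{-9}\,\overline{(y-x)}\bigl((y-x)f(y)\bigr)$, where the bracketing must again be handled with care. Here alternativity of the octonions rescues us: since $[\overline{a}, a, f] = 0$ for all $a, f \in \mathbb{O}$, one may reassociate to get $\overline{(y-x)}\bigl((y-x)f(y)\bigr) = |y-x|^2 f(y) = \varepsilon^2 f(y)$. The integral therefore reduces to $\varepsilon^{-7}\int_{\partial B_\varepsilon(x)} f\,dS$, i.e. $|S^7|$ times the spherical mean of $f$ about $x$, which converges to $|S^7|\,f(x)$ by continuity of $f$; multiplying by $\tfrac{3}{\pi^4} = 1/|S^7|$ delivers the claim. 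The main difficulty is thus entirely in the non-associative bookkeeping: establishing that the associator term in \eqref{Stokes_formula_continuous} drops out for this specific kernel, and justifying the reassociation of the triple product on the sphere; once both are secured, the argument follows the classical potential-theoretic template.
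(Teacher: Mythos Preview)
The paper does not give its own proof of this proposition: it is stated in the preliminaries as a known result, with references to N\^ono and Li--Peng, so there is no in-paper argument to compare against. Your sketch is a correct reconstruction of the classical proof from those sources: excise a ball, apply the octonionic Stokes formula~\eqref{Stokes_formula_continuous} on the perforated region, show the volume terms all vanish, and evaluate the inner sphere integral via alternativity and the spherical-mean argument.

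One point worth emphasising is that your treatment of the associator term is slightly sharper than what the paper invokes. The paper remarks after~\eqref{Stokes_formula_continuous} that the associator vanishes when $g$ satisfies the full Stein--Weiss relations $\partial_i g_j = \partial_j g_i$ for all $0 \le i < j \le 7$, but the Cauchy kernel does \emph{not} satisfy the mixed relations involving the real index: one has $\partial_0 g_j = -\partial_j g_0$ for $j \ge 1$. You correctly observe that this does not matter, because any associator triple containing $\mathbf{e}_0$ vanishes, so only the relations among the imaginary indices $1 \le j < k \le 7$ are needed, and those do hold since the imaginary components of $q_{\mathbf{0}}$ form a gradient field. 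This refinement is implicit in the literature proofs but is not spelled out in the paper's preliminary discussion, so it is good that you justified it explicitly. The reassociation on the small sphere via $[\overline{a},a,f]=0$, which follows from alternativity and $\overline{a}=2\,\mathrm{Re}(a)-a$, is likewise the standard device.
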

However, if the parenthesis were set differently, then one would obtain the different formula  
\begin{equation*}
\frac{3}{\pi^4} \int\limits_{\partial G} \Big( q_{\bf 0}(y-x) d\sigma(y)\Big) f(y)  =  f(x) +  \int\limits_G \sum\limits_{i=0}^7 \Big[q_{\bf 0}(y-x),{\cal{D}}f_i(y),\mathbf{e}_i  \Big] dy_0 \cdots dy_7, 
\end{equation*}
which involves the associator again.\par

\subsection{Discretisation of octonionic analysis}

Let us consider the unbounded uniform lattice $h\mathbb{Z}^{8}$ with the lattice constant $h>0$, which is defined in the classical way as follows
\begin{eqnarray*}
h \mathbb{Z}^{8} :=\left\{\mathbf{x} \in {\mathbb{R}}^{8}\,|\, \mathbf{x} = (m_{0}h, m_{1}h,\ldots, m_{7}h), m_{j} \in \mathbb{Z}, j=0,1,\ldots,7\right\}.
\end{eqnarray*}
Next, we define the classical forward and backward differences $\partial_{h}^{\pm j}$ as
\begin{equation}
\label{Finite_differences}
\begin{array}{lcl}
\partial_{h}^{+j}f(mh) & := & h^{-1}(f(mh+\mathbf{e}_jh)-f(mh)), \\
\partial_{h}^{-j}f(mh) & := & h^{-1}(f(mh)-f(mh-\mathbf{e}_jh)),
\end{array}
\end{equation}
for discrete functions $f(mh)$ with $mh\in h\mathbb{Z}^{8}$. In the sequel, we consider functions defined on $\Omega_{h} \subset  h\mathbb{Z}^{8}$ and taking values in octonions $\mathbb{O}$. As usual, all important properties such as the $l^{p}$-summability ($1\leq p<\infty$) are defined component-wisely.\par
A next step is to introduce discretisations of the Cauchy-Riemann operators in octonions. In contrast to the classical discrete Clifford analysis presented for instance in \cite{Brackx_1,FKS07}, where fundamental ideas of the Weyl calculus are used, we follow the alternative approach approach presented in \cite{Faustino_1} which uses a direct discretisation of the continuous Cauchy-Riemann (or Dirac) operators in terms of forward and backward finite difference operators. Following this approach, the non-associativity of octonionic multiplication can be respected, see \cite{Krausshar_1} for details. Hence, by using the finite difference operators~(\ref{Finite_differences}), we introduce a {\itshape discrete forward Cauchy-Riemann operator} $D^{+}\colon l^{p}(\Omega_{h},\mathbb{O})\to l^{p}(\Omega_{h},\mathbb{O})$ and a {\itshape discrete backward Cauchy-Riemann operators} $D^{-}\colon l^{p}(\Omega_{h},\mathbb{O})\to l^{p}(\Omega_{h},\mathbb{O})$ as follows
\begin{equation}
\label{Cauchy_Riemann_operators_discrete}
D^{+}_{h}:=\sum_{j=0}^{7} \mathbf{e}_j\partial_{h}^{+j}, \quad D^{-}_{h}:=\sum_{j=0}^{7} \mathbf{e}_j\partial_{h}^{-j}.
\end{equation}
Direct computations show that the star-Laplacian $\Delta_{h}$ is represented in this setting as follows:
\begin{equation*}
\Delta_{h} = \frac{1}{2}\left(D_{h}^{+}\overline{D_{h}^{-}}+D_{h}^{-}\overline{D_{h}^{+}}\right) \mbox{ with } \Delta_{h}:=\sum_{j=0}^{7}\partial_{h}^{+j}\partial_h^{-j},
\end{equation*}
where $\overline{D_{h}^{+}}$ and $\overline{D_{h}^{-}}$ are the discrete conjugated forward and backward Cauchy-Riemann operators, respectively:
\begin{equation*}
\overline{D_{h}^{-}}=\partial_{h}^{-0}-\sum_{j=1}^{7} \mathbf{e}_j\partial_{h}^{-j}, \quad \overline{D_{h}^{+}}=\partial_{h}^{+0}-\sum_{j=1}^{7} \mathbf{e}_j\partial_{h}^{+j}.
\end{equation*}\par
Since we consider discrete forward and backward Cauchy-Riemann operators, it is also necessary to distinguish between {\itshape discrete forward and backward monogenic functions}:
\begin{definition}\label{Definition:discrete_monogenic}
A function $f\in l^{p}(\Omega_{h},\mathbb{O})$ is called {\itshape discrete left forward monogenic} if $D_{h}^{+}f=0$ in $\Omega_{h}$. Respectively, a function $f\in l^{p}(\Omega_{h},\mathbb{O})$ is called {\itshape discrete left backward monogenic} if $D_{h}^{-}f=0$ in $\Omega_{h}$.
\end{definition}\par

\subsection{Discrete fundamental solution}

To construct discrete versions of the Borel-Pompeiu and Cauchy formulae, it is necessary to work with discrete fundamental solutions of the discrete Cauchy-Riemann operator and of the discrete Laplace operator. Thus, the following definition introduces a discrete fundamental solution to the discrete Cauchy-Riemann operators~(\ref{Cauchy_Riemann_operators_discrete}):
\begin{definition}
The function $E_{h}^{+}\colon h\mathbb{Z}^{8} \rightarrow \mathbb{O}$ is called a discrete fundamental solution of $D_{h}^{+}$ if it satisfies
\begin{equation*}
D_{h}^{+}E_{h}^{+} =\delta_h = \begin{cases} h^{-8}, & \mbox{for } mh=0,\\ 
0,& \mbox{for } mh\neq 0,
\end{cases}
\end{equation*} 
for all grid points $mh$ of $h\mathbb{Z}^{8}$. Analogously, the function $E_{h}^{-}\colon h\mathbb{Z}^{8} \rightarrow \mathbb{O}$ is called a discrete fundamental solution of $D_{h}^{-}$ if it satisfies
\begin{equation*}
D_{h}^{-}E_{h}^{-} =\delta_h = \begin{cases} h^{-8}, & \mbox{for } mh=0,\\ 
0,& \mbox{for } mh\neq 0,
\end{cases}
\end{equation*} 
for all grid points $mh$ of $h\mathbb{Z}^{8}$.
\end{definition}\par
As usual, a discrete fundamental solution can be constructed by means of the discrete Fourier transform of 
$u \in l^{p}\left( h\mathbb{Z}^8,\mathbb{O}\right)$, $1\leq p<+\infty$, 
\begin{equation*}
\mathbf{\xi} \mapsto \mathcal{F}_{h}u(\xi) = \sum_{m\in\mathbb{Z}^{8}}e^{i\langle mh,\xi \rangle}u(mh)h^8, \quad \xi\in\left[-\frac{\pi}{h},\frac{\pi}{h}\right]^{8}, 
\end{equation*}
where $\langle mh,\xi \rangle=h \sum\limits_{j=1}^{8}m_j \xi_j$. It is worth to underline, that the last expression implicitly introduces a complexified octonionic structure in the sense of $\mathbb{O}_{\mathbb{C}}:= \mathbb{O}\otimes_{\mathbb{R}}\mathbb{C}$, implying that the complex imaginary unit $i$ commutes with all real octonions.\par
The inverse transform is given by $\mathcal{F}_{h}^{-1}=\mathcal{R}_{h}\mathcal{F},$ where $\mathcal{F}$ is the classical continuous Fourier transform
\begin{equation*}
x \mapsto \mathcal{F}f(x) = \frac{1}{(2\pi)^{8}}\int_{\mathbb{R}^{8}} e^{-i \langle x,\xi \rangle} f(\xi)d\xi,
\end{equation*}
applied to an octonionic-valued function $f \in l^{p}\left( h\mathbb{Z}^8,\mathbb{O}\right)$ with $\mathrm{supp}\, f \in\left[-\frac{\pi}{h},\frac{\pi}{h}\right]^{8}$, and $\mathcal{R}_{h}$ denotes its restriction to the lattice $h\mathbb{Z}^{8}$. Note again that $f$ is $\mathbb{O}$-valued, so the expression $e^{-i \langle x,\xi \rangle} f(\xi)$ is formally embedded in $\mathbb{O}_{\mathbb{C}}$.\par 
Next, we recall the known symbols for the forward and backward differences $\partial_{h}^{\pm j}$, namely $\xi_{h}^{\pm j}=\mp h^{-1}\left( 1-e^{\mp ih\xi_j}\right)$, as well as the symbol for the star-Laplacian: 
\begin{equation*}
\mathcal{F}_h (\Delta_h u )(\xi)=d^2\mathcal{F}_h u(\xi) \mbox{ with } d^2=\frac{4}{h^2}\sum\limits_{j=0}^{7} \sin^2\left(\frac{\xi_j h}{2}\right).
\end{equation*}
Thus, applying the discrete Fourier transform to forward and backward Cauchy-Riemann operators leads to
\begin{equation*}
\mathcal{F}_h (D_{h}^{+} u)(\xi)=\left( \sum\limits_{j=0}^{7} \mathbf{e}_{j}\xi_{h}^{+j}\right) \mathcal{F}_h u(\xi), \quad \mathcal{F}_h (D_{h}^{-} u)(\xi)=\left( \sum\limits_{j=0}^{7} \mathbf{e}_{j}\xi_{h}^{-j}\right) \mathcal{F}_h u(\xi),
\end{equation*}
implying that the operators $D_{h}^{\pm}$ have complexified octonionic symbols $\widetilde{\xi}_{\pm} = \sum\limits_{j=0}^{7}\mathbf{e}_{j} \xi_{h}^{\pm j}$, respectively. Hence, the fundamental solutions $E_{h}^{\pm}$ can be expressed by
\begin{equation}
\label{Fundamental_solutions}
E_{h}^{\pm}=\mathcal{R}_{h}\mathcal{F} \left(\frac{\widetilde{\xi}_{\pm}}{d^2}\right)
=\sum\limits_{j=0}^{7}\mathbf{e}_{j}\mathcal{R}_{h} \mathcal{F} \left( \frac{\xi_{h}^{\pm j}}{d^2} \right).
\end{equation}
Evidently, the discrete fundamental solutions $E_{h}^{\pm}$ constructed above are different to the discrete fundamental solutions typically considered in the framework of discrete Clifford analysis, see for example \cite{CKKS}. The difference comes from the fact, that we work with a direct discretisation of the Cauchy-Riemann operators without involving the splitting of basis unit vectors, as it is done in discrete Clifford analysis.\par
Next, we will provide two basic properties of the discrete fundamental solutions $E_{h}^{\pm}$. For proving one of these properties, we need to recall before the following theorem \cite{Thomee}:
\begin{theorem}\label{Theorem:Thomee}
Let $n$ be the dimension of the Euclidean space and assume that $p_{1}$, $p_{2}$ are two positive integers with $p_{2}<p_{1}+n$. For a positive integer $N>0$ let $\kappa_{N}$ consider the set of functions of the form $T(\Theta)=\dfrac{T_{1}(\Theta)}{T_{2}(\Theta)}$, $0\neq \Theta \in Q_{\pi}$, where $T_{j}(\Theta)$ are trigonometric polynomials
\begin{equation*}
T_{j}(\Theta) = \sum\limits_{\mu} t_{j,\mu} e^{i \mu\cdot\Theta}, \quad j=1,2,
\end{equation*}
which satisfy the following conditions:
\begin{itemize}
\item[(i)] there are ordinary homogeneous polynomials $P_{j}(\Theta)$ of degree $p_{j}$, $j=1,2$, such that $T_{j}(\Theta)=P_{j}(\Theta)+o(|\Theta|^{p_{j}})$ when $\Theta \rightarrow 0$,
\item[(ii)] $|T_{2}(\Theta)|\geq N^{-1} |\Theta|^{p_{2}}$, $\Theta \in Q_{\pi}$,
\item[(iii)] $|t_{j,\mu}|\leq N$,
\item[(iv)] $t_{j,\mu}=0$ for $|\mu|>N$.
\end{itemize}
For any $N>0$ satisfying $(ii)$-$(iv)$ there is a constant $C$ such that for all $\mu$ (with integer components) and $T\in \kappa_{N}$,
\begin{equation}
\left| \int\limits_{Q_{\pi}} T(\Theta) e^{i \mu\cdot\Theta} d\Theta \right| \leq C(|\mu|+1)^{-(n+p_{1}-p_{2})}.
\end{equation}
\end{theorem}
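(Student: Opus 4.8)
The plan is to read the exponent $n+p_1-p_2$ as the Fourier-coefficient decay rate forced by the local behaviour of $T$ at the origin: condition $(i)$ makes $T(\Theta)$ asymptotically equal to the homogeneous function $P_1(\Theta)/P_2(\Theta)$ of degree $p_1-p_2$, and a homogeneous singularity of degree $\alpha=p_1-p_2>-n$ on an $n$-dimensional periodicity cell produces Fourier coefficients decaying like $|\mu|^{-(n+\alpha)}$. I would therefore reduce the statement to two ingredients: (a) uniform pointwise derivative estimates for $T$ on the punctured cell, and (b) a cut-off together with an integration-by-parts argument carried out at the natural scale $|\Theta|\sim 1/|\mu|$.

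First I would establish, for every multi-index $\alpha$, the bound
\begin{equation*}
|D^{\alpha}T(\Theta)|\le C_{\alpha}\,|\Theta|^{\,p_1-p_2-|\alpha|},\qquad \Theta\in Q_{\pi}\setminus\{0\},
\end{equation*}
with $C_\alpha$ depending only on $N$, hence uniform over $\kappa_N$. Condition $(ii)$ guarantees $T_2(\Theta)\neq0$ for $\Theta\neq0$, so $T=T_1/T_2$ is smooth on $Q_\pi\setminus\{0\}$; dividing $|T_2(\Theta)|\ge N^{-1}|\Theta|^{p_2}$ by $|\Theta|^{p_2}$ and letting $\Theta\to0$ along rays shows in addition that the leading part $P_2$ is uniformly elliptic, $|P_2(\omega)|\ge N^{-1}$ on the unit sphere. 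The coefficient and degree bounds $(iii)$--$(iv)$ control all derivatives of the trigonometric polynomials $T_1,T_2$ by constants depending only on $N$, and since by $(i)$ their Taylor expansions at the origin start in degrees $p_1$ and $p_2$, one obtains $|D^\beta T_j(\Theta)|\le C|\Theta|^{\,p_j-|\beta|}$ near $0$. The ellipticity of $P_2$ then gives $|D^\gamma(T_2^{-1})(\Theta)|\le C|\Theta|^{-p_2-|\gamma|}$, and combining this with the previous bound through the Leibniz rule produces the claimed homogeneous scaling near the origin; away from the origin every power of $|\Theta|$ is comparable to a constant, so the same inequality persists there.

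With these estimates in hand I would bound the oscillatory integral. For $|\mu|\le 1$ the assertion is immediate, because $p_1-p_2>-n$ makes $|\Theta|^{p_1-p_2}$ integrable, so $T\in L^1(Q_\pi)$ uniformly in $\kappa_N$. For large $|\mu|$ set $\rho=|\mu|^{-1}$ and choose a smooth cut-off $\psi$ with $\psi\equiv 0$ for $|\Theta|\le\rho$, $\psi\equiv1$ for $|\Theta|\ge2\rho$ and $|D^\beta\psi|\le C\rho^{-|\beta|}$, splitting $T=(1-\psi)T+\psi T$. The near part is estimated directly,
\begin{equation*}
\left|\int_{Q_\pi}(1-\psi)\,T\,e^{i\mu\cdot\Theta}\,d\Theta\right|\le\int_{|\Theta|\le2\rho}|T|\,d\Theta\le C\rho^{\,n+p_1-p_2}=C|\mu|^{-(n+p_1-p_2)}.
\end{equation*}
For the far part, $\psi T$ is smooth and periodic, so integrating by parts $k$ times in a coordinate $\Theta_j$ with $|\mu_j|\ge|\mu|/\sqrt n$ produces no boundary contribution on $\partial Q_\pi$ and a prefactor $|\mu_j|^{-k}\le(\sqrt n)^{k}\rho^{k}$. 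Expanding $\partial_{\Theta_j}^{k}(\psi T)$ by the Leibniz rule, the term carrying all derivatives on $T$ is supported in $|\Theta|\ge\rho$ and controlled by $\int_{|\Theta|\ge\rho}|\Theta|^{\,p_1-p_2-k}\,d\Theta\le C\rho^{\,n+p_1-p_2-k}$ provided $k>n+p_1-p_2$; each term with $l\ge1$ derivatives on $\psi$ lives on the annulus $\rho\le|\Theta|\le2\rho$ of measure $\sim\rho^n$ and is bounded by $\rho^{-l}\cdot\rho^{\,p_1-p_2-k+l}\cdot\rho^{n}=C\rho^{\,n+p_1-p_2-k}$. Multiplying by the prefactor $\rho^{k}$ gives $C|\mu|^{-(n+p_1-p_2)}$ for every term, which is the claim.

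The main obstacle is the uniform derivative estimate of the second paragraph. The delicate point is that the constant must depend only on $N$ and not on the individual $T\in\kappa_N$; this forces one to extract the uniform ellipticity of $P_2$ from the single inequality $(ii)$ and then to propagate bounds through arbitrarily high orders of the quotient rule while preserving the exact homogeneous scaling $|\Theta|^{\,p_1-p_2-|\alpha|}$ near the origin. Once this scaling is secured, the cut-off and integration-by-parts bookkeeping of the third paragraph is entirely routine, the balance between the prefactor $\rho^{k}$ and the volume and annulus factors being arranged precisely by the choice $\rho=|\mu|^{-1}$.
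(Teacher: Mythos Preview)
The paper does not prove this theorem; it is quoted as a known result from Thom\'ee's 1968 paper (reference \cite{Thomee}) and then invoked as a black box in the subsequent $l^p$-estimate for the discrete fundamental solution $E_h^{\pm}$. There is therefore no in-paper argument against which to compare your proposal.

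That said, your sketch is correct and is essentially the classical route to such Fourier-coefficient decay estimates: uniform homogeneous-type derivative bounds $|D^{\alpha}T(\Theta)|\le C_{\alpha}|\Theta|^{\,p_1-p_2-|\alpha|}$, obtained from the ellipticity of $P_2$ (which you correctly extract from condition (ii) by passing to the limit along rays) combined with the coefficient and degree control (iii)--(iv), followed by a cut-off at the scale $\rho=|\mu|^{-1}$ and repeated integration by parts. One small imprecision: you write that ``$\psi T$ is smooth and periodic'', but $\psi$ is a bump function and $\psi T$ is not periodic. What actually makes the boundary terms vanish is that $\psi\equiv 1$ near $\partial Q_{\pi}$, so $\psi T=T$ there, and the $2\pi$-periodicity of $T\,e^{i\mu\cdot\Theta}$ (with $\mu$ integer) cancels contributions from opposite faces. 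With that clarification your argument goes through and would constitute a self-contained proof of the cited result.
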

These tools in hand now allow us to establish the following theorem:
\begin{theorem}
The discrete fundamental solutions $E_{h}^{\pm}$ to the discrete forward and backward Cauchy-Riemann operators satisfy:
\begin{itemize}
\item[(i)] $D_{h}^{\pm}E_{h}^{\pm}(mh) = \delta_{h}(mh)$, $m\in\mathbb{Z}^{8}$;
\item[(ii)] $E_{h}^{\pm}\in l^{p}(h\mathbb{Z}^{8},\mathbb{O})$ for $p>\frac{8}{7}$.
\end{itemize}
\end{theorem}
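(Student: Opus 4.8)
The plan is to handle the two statements separately, both by passing to the discrete Fourier transform $\mathcal{F}_h$ and using the explicit symbols recorded above. For (i) I would argue entirely on the Fourier side. By the mapping property $\mathcal{F}_h(D_h^\pm u)(\xi)=\widetilde{\xi}_\pm\,\mathcal{F}_h u(\xi)$ together with the construction $\mathcal{F}_h E_h^\pm(\xi)=\widetilde{\xi}_\pm/d^2$, the transform of $D_h^\pm E_h^\pm$ is the left product $\widetilde{\xi}_\pm\cdot(\widetilde{\xi}_\pm/d^2)$ of two complexified octonionic symbols. Since a single application of $D_h^\pm$ to a fixed function entails no reassociation, this symbol is evaluated with the ordinary products $\mathbf{e}_j\mathbf{e}_k$ and the scalars $\xi_h^{\pm j}$, which commute with every $\mathbf{e}_k$ in $\mathbb{O}_{\mathbb{C}}$. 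The whole of (i) then reduces to confirming that the symbol of $E_h^\pm$ is a pointwise inverse of $\widetilde{\xi}_\pm$, i.e. that this product equals the constant symbol $1$ on $Q_\pi\setminus\{0\}$; this is a direct computation from $\xi_h^{\pm j}=\mp h^{-1}(1-e^{\mp ih\xi_j})$ and $d^2=\tfrac{4}{h^2}\sum_{j=0}^7\sin^2(h\xi_j/2)$. As $\mathcal{F}_h\delta_h\equiv 1$, applying $\mathcal{F}_h^{-1}=\mathcal{R}_h\mathcal{F}$ to the constant symbol returns $D_h^\pm E_h^\pm=\delta_h$.

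For (ii) I would first extract pointwise decay of $E_h^\pm$ and then sum. After the substitution $\Theta=h\xi\in Q_\pi$, each octonionic component of $E_h^\pm(mh)$ is, up to a fixed unit $\mathbf{e}_j$ and harmless constants, an integral $\int_{Q_\pi}T(\Theta)\,e^{i\mu\cdot\Theta}\,d\Theta$ with $T=\xi_h^{\pm j}/d^2$ and $\mu=m$. This $T$ is exactly of the quotient type $T_1/T_2$ governed by Theorem~\ref{Theorem:Thomee}: the numerator $\xi_h^{\pm j}$ is a trigonometric polynomial with leading homogeneous part of degree $p_1=1$, while $d^2$ has leading part $|\Theta|^2/h^2$, so $p_2=2$. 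I would then verify the hypotheses in dimension $n=8$: the order condition $p_2<p_1+n$ reads $2<9$; the nondegeneracy $|T_2(\Theta)|\geq N^{-1}|\Theta|^{2}$ on all of $Q_\pi$ follows from the elementary bound $\sin^2(t/2)\geq t^2/\pi^2$ for $|t|\leq\pi$, which gives $d^2\gtrsim|\Theta|^2$ throughout the cube; and the coefficient and support conditions are immediate because $\xi_h^{\pm j}$ and $d^2$ are fixed trigonometric polynomials with finitely many terms.

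Theorem~\ref{Theorem:Thomee} then yields the decay $|E_h^\pm(mh)|\leq C(|m|+1)^{-(n+p_1-p_2)}=C(|m|+1)^{-7}$. Finally I would test summability directly: $\|E_h^\pm\|_{l^p}^p=\sum_{m\in\mathbb{Z}^8}|E_h^\pm(mh)|^p\,h^8\leq C h^8\sum_{m\in\mathbb{Z}^8}(|m|+1)^{-7p}$, and the lattice sum over $\mathbb{Z}^8$ converges precisely when $7p>8$, that is $p>\tfrac{8}{7}$, which is the asserted range. The main obstacle I anticipate is not the summation but the clean verification of the hypotheses of Theorem~\ref{Theorem:Thomee} — in particular pinning down the homogeneous leading orders $p_1=1$ and $p_2=2$ and establishing the lower bound $|d^2|\geq N^{-1}|\Theta|^2$ globally on $Q_\pi$ rather than merely near the origin; once the decay exponent $7$ is secured, the passage to $p>\tfrac{8}{7}$ is routine.
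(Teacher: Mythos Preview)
Your proposal is correct and follows essentially the same route as the paper. For (i) the paper likewise declares the identity a ``straightforward calculation'' on the Fourier side, matching your symbol computation; for (ii) both arguments rest on Theorem~\ref{Theorem:Thomee} applied to the components $\xi_h^{\pm j}/d^2$, obtain the $|m|^{-7}$ decay, and conclude $p>\tfrac{8}{7}$ by summing over $\mathbb{Z}^8$. The only noteworthy difference is that the paper first separates $\xi_h^{+j}$ into its real part $-\tfrac{2}{h}\sin^2(h\xi_j/2)$ and imaginary part $-\tfrac{1}{h}\sin(h\xi_j)$ and invokes Thom\'ee twice, with $p_1=p_2=2$ for the former (yielding an $O(|mh|^{-8})$ contribution) and $p_1=1$, $p_2=2$ for the latter (yielding the dominant $O(|mh|^{-7})$); you instead apply Thom\'ee once to the full complex-valued numerator with $p_1=1$. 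Since the theorem as stated allows complex coefficients $t_{j,\mu}$, your shortcut is legitimate and lands on the same decay exponent, while the paper's split makes the leading orders slightly more transparent.
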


\begin{proof}
The proof of property (i) can be done by a straightforward calculations. To prove (ii), we are going to use the integral representation of the discrete fundamental solution $E_{h}^{+}$:
\begin{equation*}
E_{h}^{+}(mh)=\frac{1}{(2\pi)^{8}}\int\limits_{\xi\in[-\frac{\pi}{h},\frac{\pi}{h}]^{8}} \frac{\tilde{\xi}_{+}}{d^{2}}e^{-i\langle mh,\xi \rangle}d\xi, \quad m\in\mathbb{Z}^{8}.
\end{equation*}
Taking into account the definition of $\tilde{\xi}_{+}$ and that 
\begin{equation*}
\xi_{h}^{+j}=-\frac{1}{h}\left( 1-e^{-ih\xi_j}\right) = -\frac{1}{h}\left( 1-\cos(h\xi_j) + i\sin(h\xi_j)\right),
\end{equation*}
and after using known trigonometric identity, we get
\begin{equation*}
\begin{array}{rcl}
\displaystyle \left|E_{h}^{+}(mh)\right| & = & \displaystyle \left| \frac{1}{(2\pi)^{8}} \int\limits_{\xi\in[-\frac{\pi}{h},\frac{\pi}{h}]^{8}} \frac{\xi_{h}^{+j}}{d^{2}}e^{-i\langle mh,\xi \rangle}d\xi \right| \\
\\
& = & \displaystyle \left| \frac{1}{(2\pi)^{8}} \int\limits_{\xi\in[-\frac{\pi}{h},\frac{\pi}{h}]^{8}} \frac{2\sin^{2}\frac{h\xi_{j}}{2}}{hd^{2}}e^{-i\langle mh,\xi \rangle}d\xi \right. \\
\\
& & \displaystyle \left.+ i \frac{1}{(2\pi)^{8}} \int\limits_{\xi\in[-\frac{\pi}{h},\frac{\pi}{h}]^{8}} \frac{\sin(h\xi_j)}{hd^{2}}e^{-i\langle mh,\xi \rangle}d\xi \right|.
\end{array}
\end{equation*}
The conditons of Theorem~\ref{Theorem:Thomee} are satisfied for the two integrals above if $p_{1}=p_{2}=2$ for the first integral, and if $p_{1}=1$, $p_{2}=2$ for the second integral, see also \cite{Legatiuk} for a detailed discussion. Thus, the following estimate is obtained:
\begin{equation*}
\left| \frac{1}{(2\pi)^{8}} \int\limits_{\xi\in[-\frac{\pi}{h},\frac{\pi}{h}]^{8}} \frac{\xi_{h}^{+j}}{d^{2}}e^{-i\langle mh,\xi \rangle}d\xi \right| \leq \frac{C h}{\left(|mh|+h\right)^{8}} + \frac{C}{\left(|mh|+h\right)^{7}} \approx \mathcal{O}\left(\frac{1}{|mh|^{7}}\right).
\end{equation*}
Hence, it can be concluded that the fundamental solution $E_{h}^{+}$ belongs to $l^{p}(h\mathbb{Z}^{8},\mathbb{O})$ for $p>\frac{8}{7}$. The proof for the formula of $E_{h}^{-}$ can be performed analogously.
\end{proof}

\section{Discrete Stokes' and Borel-Pompeiu formulae}

We start this section by recalling the discrete octonionic Stokes' formulae from \cite{Krausshar_1}. Again, for shortening the notations, the long list of indices $m_{0},m_{1},\ldots,m_{7}$, will be omitted from the argument, i.e. we will simply write $f(mh)$ instead of $f(m_{0}h,m_{1}h,\ldots,m_{7}h)$.\par
We are particularly interested in studying upper and lower half-spaces (or half-lattices), which are defined as follows:
\begin{equation*}
\begin{array}{rcl}
h\mathbb{Z}_{+}^{8} & := & \left\{(h\underline{m},hm_{7})\colon \underline{m}\in\mathbb{Z}^{7},m_{7}\in\mathbb{Z}_{+}\right\}, \\
h\mathbb{Z}_{-}^{8} & := & \left\{(h\underline{m},hm_{7})\colon \underline{m}\in\mathbb{Z}^{7},m_{7}\in\mathbb{Z}_{-}\right\}.
\end{array}
\end{equation*}\par
The following two theorems express the discrete octonionic Stokes' formulae for the upper half-lattice and for the lower half-lattice, respectively \cite{Krausshar_1}:
\begin{theorem} 
The discrete Stokes' formula for the upper half-lattice $h\mathbb{Z}_{+}^{8}$ is given by
\begin{equation}
\label{DiscreteStokesFormula_upper_half_lattice}
\begin{array}{c}
\displaystyle \sum_{m\in \mathbb{Z}_{+}^{8}}  \left\{ \left[ g(mh)D_h^{+}\right] f(mh) - g(mh) \left[ D_h^{-}f(mh) \right]  \right\} h^8 \\
\displaystyle = \sum\limits_{\underline{m}\in \mathbb{Z}^{7}} \mathbf{e}_{7}\left(g(\underline{m},1)f_{k}(\underline{m},0)\right) h^{8}
\end{array}  
\end{equation}
for all discrete functions $f$ and $g$ such that the series converge.
\end{theorem}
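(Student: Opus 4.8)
The plan is to prove this as a discrete Green--Stokes identity by \emph{summation by parts} (Abel summation), the lattice analogue of integration by parts. First I would expand the bulk summand into its eight directional pieces, using the definitions of $D_h^{\pm}$ and of the finite differences in~(\ref{Finite_differences}) and keeping all parentheses rigidly in place because of non-associativity:
\begin{equation*}
\left[ g D_h^{+}\right] f - g\left[ D_h^{-}f \right] = \sum_{j=0}^{7}\Big( ((\partial_h^{+j}g)\mathbf{e}_j)f - g(\mathbf{e}_j(\partial_h^{-j}f)) \Big).
\end{equation*}

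The engine is the one-dimensional shift identity: for each fixed direction $j$ and summation over all of $\mathbb{Z}$ in that variable,
\begin{equation*}
\sum_m (\partial_h^{+j}a)(mh)\,b(mh)\,h^8 = -\sum_m a(mh)\,(\partial_h^{-j}b)(mh)\,h^8,
\end{equation*}
which holds for \emph{any} two octonionic factors $a,b$ because only a single product is involved, so no associativity question arises at this stage; over the half-line variable $m_7\ge 0$ the identical index shift additionally leaves one uncancelled boundary layer at $m_7=0$. I would apply this direction by direction, taking $a=g\mathbf{e}_j$ in the first term and $b=\mathbf{e}_j f$ in the second. For $j=0,\dots,6$ the lattice is two-sided infinite, so the convergence hypothesis on the series annihilates the telescoped endpoints at $\pm\infty$; for $j=7$ the shift produces precisely a boundary sum over $\underline{m}\in\mathbb{Z}^{7}$ at level $m_7=0$, in which $g$ is evaluated at the neighbouring level $m_7=1$ exactly because the forward difference on $g$ and the backward difference on $f$ move in opposite directions.

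The genuinely octonionic content, and the step I expect to be the main obstacle, is the control of the three-factor products. Every time a finite difference is carried across a product $g\,\mathbf{e}_j\,f$ by the discrete Leibniz rule, the interior reorganizes into a discrete divergence (which telescopes) \emph{plus} associator remainders, since the two admissible parenthesizations $((\partial_h^{+j}g)\mathbf{e}_j)f$ and $g(\mathbf{e}_j(\partial_h^{-j}f))$ differ by an associator $[\,\cdot\,,\mathbf{e}_j,\,\cdot\,]$ --- exactly the obstruction that survives in the continuous Stokes formula~(\ref{Stokes_formula_continuous}). The decisive claim is that after summing over $j=0,\dots,7$ and over the lattice these associator contributions cancel identically. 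I would establish this using the total antisymmetry of the octonionic associator together with the antisymmetry $\mathbf{e}_i\mathbf{e}_j=-\mathbf{e}_j\mathbf{e}_i$ read off Table~\ref{Table_octonions}, pairing the $(i,j)$- and $(j,i)$-contributions so that they annihilate in pairs; this same mechanism is what flips the surviving sign relative to the associative Clifford setting, accounting for the minus sign between the two bulk terms.

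Finally I would assemble the pieces: the divergence telescopes, the endpoint contributions from directions $0,\dots,6$ vanish by summability, the associator remainders cancel by the previous step, and only the single $m_7=0$ layer coming from direction $7$ survives, yielding $\sum_{\underline{m}\in\mathbb{Z}^{7}}\mathbf{e}_{7}\big(g(\underline{m},1)f(\underline{m},0)\big)h^{8}$. The two anticipated difficulties are therefore (i) the bookkeeping of the shifts that forces $g$ at level $1$ to pair with $f$ at level $0$, and (ii) the associator cancellation, which is the heart of the statement and the reason the discrete formula is cleaner than its continuous counterpart.
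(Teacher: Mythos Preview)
Your overall architecture---expand the bulk summand direction by direction, perform summation by parts (index shift), and isolate the $j=7$ boundary layer---matches the paper's approach. The paper does not reprove this theorem (it is cited from \cite{Krausshar_1}), but the Remark immediately following it carries out the identical computation for the central operator $\tilde D_h$ on the full lattice, and that argument is the template.

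The genuine gap is in your step (ii), the ``associator cancellation''. After you perform summation by parts componentwise, the quantity that must vanish is
\[
\sum_{j}\sum_{i,k} g_i(m)\,(\partial_h^{-j}f_k)(m)\,\Big[(\mathbf{e}_i\mathbf{e}_j)\mathbf{e}_k+\mathbf{e}_i(\mathbf{e}_j\mathbf{e}_k)\Big],
\]
i.e.\ the \emph{sum} of the two parenthesizations, not their difference. This object is not an associator, so the total antisymmetry of the associator gives you nothing directly. Your proposed $(i,j)\leftrightarrow(j,i)$ pairing cannot work either: the scalar weights $g_i\,\partial_h^{-j}f_k$ carry no symmetry in $(i,j)$, so the $(i,j)$ and $(j,i)$ contributions have unrelated coefficients and do not annihilate in pairs for generic $f,g$.

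The paper's mechanism is different and strictly pointwise. It expands into real components from the outset and then invokes the basis identity $(\mathbf{e}_i\mathbf{e}_j)\mathbf{e}_k=-\mathbf{e}_i(\mathbf{e}_j\mathbf{e}_k)$ (stated in Section~\ref{Pre} and used verbatim in the Remark), which kills the bracket above for every individual triple $(i,j,k)$ before any summation over indices or lattice points. That identity---not an antisymmetry pairing---is also what produces the sign flip relative to the associative Clifford case. So your plan is salvageable, but you need to replace the associator-cancellation paragraph with the componentwise expansion and a direct appeal to this pointwise relation on basis triples.
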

\begin{theorem} 
The discrete Stokes' formula for the lower half-lattice $h\mathbb{Z}_{-}^{8}$ is given by
\begin{equation}
\label{DiscreteStokesFormula_lower_half_lattice}
\begin{array}{c}
\displaystyle \sum_{m\in \mathbb{Z}_{-}^{8}}  \left\{ \left[ g(mh)D_h^{+}\right] f(mh) - g(mh) \left[ D_h^{-}f(mh) \right]  \right\} h^8 \\
\displaystyle = -\sum\limits_{\underline{m}\in \mathbb{Z}^{7}} \mathbf{e}_{7}\left(g(\underline{m},0)f_{k}(\underline{m},-1)\right) h^{8}
\end{array}  
\end{equation}
for all discrete functions $f$ and $g$ such that the series converge.
\end{theorem}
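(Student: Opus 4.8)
The plan is to establish \eqref{DiscreteStokesFormula_lower_half_lattice} by the same discrete summation-by-parts (Abel summation) mechanism that underlies the upper half-lattice formula \eqref{DiscreteStokesFormula_upper_half_lattice}, paying attention only to how the one-sided summation range in the $x_7$-direction changes the location and the orientation of the surviving boundary layer. First I would expand the bracketed summand coordinate-wise. Writing the right action as $g D_h^{+} = \sum_{j=0}^{7} (\partial_h^{+j} g)\,\mathbf{e}_j$ and $D_h^{-} f = \sum_{j=0}^{7} \mathbf{e}_j\,(\partial_h^{-j} f)$, the integrand becomes
\[
\sum_{j=0}^{7} \Bigl( \bigl[(\partial_h^{+j} g)\,\mathbf{e}_j\bigr] f - g\,\bigl[\mathbf{e}_j\,(\partial_h^{-j} f)\bigr] \Bigr)(mh),
\]
so that the whole left-hand side splits into eight directional sums over the lower half-lattice $h\mathbb{Z}_{-}^{8}$.

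For each fixed direction $j$ I would perform the index shift $m \mapsto m-\mathbf{e}_j$ that turns a forward difference acting on $g$ into a backward difference acting on $f$; this is the discrete counterpart of moving the difference operator from one factor to the other. For the seven tangential directions $j=0,\ldots,6$ the corresponding index $m_j$ ranges over all of $\mathbb{Z}$, so the shift is a bijection, and under the stated convergence (absolute summability, hence decay at infinity) the rearranged sums reassemble into interior sums that telescope away with no leftover boundary contribution, exactly as in the proof of \eqref{DiscreteStokesFormula_upper_half_lattice}. The only direction in which the shift fails to be a bijection is $j=7$: here $m_7$ is restricted to $\mathbb{Z}_{-}=\{-1,-2,\dots\}$, and the telescoping $\sum_{m_7\le -1}\partial_h^{\pm 7}(\cdot)$ collapses onto the single layer straddling the top of the lower half-lattice, namely the points $m_7=-1$ and $m_7=0$, producing precisely the octonionic boundary product $\mathbf{e}_7\bigl(g(\underline{m},0)\,f(\underline{m},-1)\bigr)$ summed over $\underline{m}\in\mathbb{Z}^{7}$.

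The overall minus sign is then forced by orientation. For the upper half-lattice the telescope $\sum_{m_7\ge 1}$ retains its \emph{lower} endpoint, whereas for the lower half-lattice $\sum_{m_7\le -1}$ retains its \emph{upper} endpoint, and the two endpoints enter the collapsed sum with opposite signs. Alternatively, one may avoid recomputing anything and deduce \eqref{DiscreteStokesFormula_lower_half_lattice} directly from \eqref{DiscreteStokesFormula_upper_half_lattice} through the reflection $m_7\mapsto -m_7$, under which the normal difference operators interchange ($\partial_h^{+7}\leftrightarrow -\partial_h^{-7}$) while the tangential differences are preserved; bookkeeping this sign through the single $\mathbf{e}_7$-term converts the upper-half identity into the lower-half one.

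I expect the main obstacle to be neither the telescoping nor the convergence bookkeeping, both of which are routine once the decay from the summability hypothesis is invoked, but rather the careful handling of non-associativity. One must verify that the re-bracketing needed to pass from $\bigl[(\partial_h^{+j} g)\,\mathbf{e}_j\bigr] f$ to $g\,\bigl[\mathbf{e}_j\,(\partial_h^{-j} f)\bigr]$ introduces only associator contributions that cancel across the tangential directions, so that the isolated boundary term inherits exactly the parenthesisation $\mathbf{e}_7(g\,f)$ displayed on the right-hand side, and that the orientation sign survives this rearrangement intact. This is the discrete reflection of the phenomenon noted earlier in the excerpt, where the associator terms intrinsic to the continuous Stokes' formula \eqref{Stokes_formula_continuous} disappear in the discretised setting.
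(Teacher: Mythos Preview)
Your outline is the right one and matches the paper's approach. Note that the paper does not actually prove this theorem here; both half-lattice Stokes formulae are quoted from \cite{Krausshar_1}. The only computation the paper carries out is in the subsequent Remark, for the whole space with the central operator $\tilde D_h$, and that computation is exactly your scheme: expand componentwise, shift the summation index, and telescope, with the half-lattice case leaving a single boundary layer from the $j=7$ direction and the orientation determining the sign.

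The one place your proposal is off is the handling of non-associativity, which you flag as the main obstacle and describe as ``associator contributions that cancel across the tangential directions''. That is not how the argument runs. After writing $g=\sum_i g_i\mathbf{e}_i$ and $f=\sum_k f_k\mathbf{e}_k$, the paper applies the basis relation $(\mathbf{e}_i\mathbf{e}_j)\mathbf{e}_k=-\mathbf{e}_i(\mathbf{e}_j\mathbf{e}_k)$ uniformly to every triple, for all $j$ alike; this single sign flip is exactly what turns the combination into $[gD_h^{+}]f-g[D_h^{-}f]$ (with a minus, rather than the plus one sees in associative Clifford analysis) and simultaneously puts both terms in the same bracketing $\mathbf{e}_i(\mathbf{e}_j\mathbf{e}_k)$. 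There is no residual associator to cancel and no distinction between tangential and normal directions at this algebraic step; the associator simply never appears because the real-coefficient componentwise expansion together with this anti-associativity relation absorbs it. Once that is used, the index shift and the $m_7\le -1$ telescope produce $-\mathbf{e}_7\bigl(g(\underline m,0)f(\underline m,-1)\bigr)$ directly, as you anticipated.
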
\par
\begin{remark}
We would like to remark that the discrete Stokes' formulae introduced above do not contain the associator, which appears in the continuous case as an intrinsic term. The non-appearance of the associator therefore represents a surprising effect of the discrete setting. Moreover, the cancellation of the associator is not related to the consideration of the forward or backward discrete Cauchy-Riemann operators, either. Note that if we consider instead for example the central discrete Cauchy-Riemann operator:
\begin{equation*}
\tilde{D}_{h} := \frac{1}{2}\left(D_{h}^{+}+D_{h}^{-}\right),
\end{equation*}
then the discrete octonionic formula for the whole space would have the form
\begin{equation*}
\begin{array}{rl}
& \displaystyle \sum_{m\in \mathbb{Z}^{8}}  \left\{ \left[ g(mh)\tilde{D}_{h}\right] f(mh) - g(mh) \left[ \tilde{D}_{h}f(mh) \right]  \right\} h^8 = 0.
\end{array}  
\end{equation*}
For the sake of completeness, let us briefly outline the proof of this formula. We start with the first summand ($h$ is omitted for the sake of abbreviation)
\begin{equation*}
\begin{array}{rl}
& \displaystyle \sum\limits_{m\in \mathbb{Z}^{8}} \left[ g(m)\tilde{D}_{h}\right]f(m)h^{8} = \sum\limits_{m\in \mathbb{Z}^{8}} \sum\limits_{j=0}^{7} \frac{1}{2}\left[\partial_{h}^{+j}g(m)\mathbf{e}_{j}+\partial_{h}^{-j}g(m)\mathbf{e}_{j}\right] f(m) h^{8} \\
\\
= & \displaystyle \sum\limits_{m\in \mathbb{Z}^{8}} \sum\limits_{j=0}^{7} \frac{1}{2} \sum \limits_{i=0}^{7}\sum \limits_{k=0}^{7} \left[\partial_{h}^{+j}g_{i}(m)\mathbf{e}_{i}\mathbf{e}_{j}+\partial_{h}^{-j}g_{i}(m)\mathbf{e}_{i}\mathbf{e}_{j}\right] f_{k}(m)\mathbf{e}_{k} h^{8}.
\end{array}
\end{equation*}
By using the relation $(\mathbf{e}_{i}\mathbf{e}_{j})\mathbf{e}_{k}=-\mathbf{e}_{i}(\mathbf{e}_{j}\mathbf{e}_{k})$ and the definitions of finite differences, we get
\begin{equation*}
\begin{array}{rl}
& \displaystyle \sum\limits_{m\in \mathbb{Z}^{8}} \sum\limits_{j=0}^{7}\frac{1}{2} \sum \limits_{i=0}^{7}\sum \limits_{k=0}^{7} \left[-\left(g_{i}(m+\mathbf{e}_{j})-g_{i}(m)\right)f_{k}(m)\mathbf{e}_{i}(\mathbf{e}_{j}\mathbf{e}_{k})-\right. \\
& \left. \displaystyle - \left(g_{i}(m)-g_{i}(m-\mathbf{e}_{j})\right)f_{k}(m)\mathbf{e}_{i}(\mathbf{e}_{j}\mathbf{e}_{k})\right]h^{8} \\
\\
= & \displaystyle \sum\limits_{m\in \mathbb{Z}^{8}} \sum\limits_{j=0}^{7}\frac{1}{2} \sum \limits_{i=0}^{7}\sum \limits_{k=0}^{7} \left[-g_{i}(m+\mathbf{e}_{j})\mathbf{e}_{i}f_{k}(m)+g_{i}(m-\mathbf{e}_{j})\mathbf{e}_{i}f_{k}(m)\right](\mathbf{e}_{j}\mathbf{e}_{k})h^{8}.
\end{array}
\end{equation*}
Performing a change of variables in the latter expression, we get
\begin{equation*}
\begin{array}{rl}
& \displaystyle \sum\limits_{m\in \mathbb{Z}^{8}} \sum\limits_{j=0}^{7}\frac{1}{2} \sum\limits_{i=0}^{7}\sum \limits_{k=0}^{7} \left[-g_{i}(m)\mathbf{e}_{i}f_{k}(m-\mathbf{e}_{j})+g_{i}(m)\mathbf{e}_{i}f_{k}(m+\mathbf{e}_{j})\right](\mathbf{e}_{j}\mathbf{e}_{k})h^{8} \\
\\
= & \displaystyle \sum\limits_{m\in \mathbb{Z}^{8}} \sum\limits_{j=0}^{7} \frac{1}{2}\sum\limits_{i=0}^{7}\sum \limits_{k=0}^{7} \left[g_{i}(m)\mathbf{e}_{i}\left(f_{k}(m+\mathbf{e}_{j})+f_{k}(m-\mathbf{e}_{j})\right)\right](\mathbf{e}_{j}\mathbf{e}_{k})h^{8} \\
\\
= & \displaystyle \sum\limits_{m\in \mathbb{Z}^{8}} \sum\limits_{j=0}^{7} \frac{1}{2}\sum\limits_{i=0}^{7}\sum \limits_{k=0}^{7} g_{i}(m)\mathbf{e}_{i}\left(\partial_{h}^{+j}\mathbf{e}_{j} f_{k}\mathbf{e}_{k}+\partial_{h}^{-j}\mathbf{e}_{j} f_{k}\mathbf{e}_{k}\right)h^{8} \\
\\
= & \displaystyle \sum\limits_{m\in \mathbb{Z}^{8}} g(m)\left[\tilde{D}_{h}f(m)\right]h^{8}.
\end{array}
\end{equation*}
We just have proved the discrete octonionic Stokes' formula for the whole space in the case of the central discrete Cauchy-Riemann operator. This highlights that the constructions in the discrete settings exhibit an essentially different nature than the constructions in the continuous case, where the associstor appears as an intrinsic ingredient. 
\end{remark}\par
By means of the discrete discrete octonionic Stokes' formule~(\ref{DiscreteStokesFormula_upper_half_lattice})-(\ref{DiscreteStokesFormula_lower_half_lattice}), the discrete octononionic Borel-Pompeiu formulae can be introduced:
\begin{theorem}
Let $E_{h}^{+}$ be the discrete fundamental solution to the discrete Cauchy-Riemann operator $D_{h}^{+}$. Then the discrete octonionic Borel-Pompeiu formula for the upper half-lattice $h\mathbb{Z}_{+}^{8}$ is given by
\begin{equation}
\label{Discrete_Borel_Pompeiu_upper_half_lattice}
\begin{array}{c}
\displaystyle \sum_{n\in \mathbb{Z}_{+}^{8}}  E_{h}^{+}(nh-mh) \left[ D_h^{-}f(mh) \right] h^8 \\
\displaystyle + \sum\limits_{\underline{n}\in \mathbb{Z}^{7}} \mathbf{e}_{7}\left(E_{h}^{+}(\underline{n}h-\underline{m}h,1)f_{k}(\underline{m},0)\right) h^{8} = \left\{\begin{array}{cl}
0, & m\notin\mathbb{Z}_{+}^{8}, \\
f(mh), & m\in\mathbb{Z}_{+}^{8},
\end{array} \right.
\end{array}  
\end{equation}
for any discrete function $f$ such that the series converge.
\end{theorem}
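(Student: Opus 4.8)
The plan is to obtain \eqref{Discrete_Borel_Pompeiu_upper_half_lattice} directly from the discrete Stokes' formula \eqref{DiscreteStokesFormula_upper_half_lattice} by inserting a shifted copy of the discrete fundamental solution into the slot of the function $g$. Fixing the evaluation index $m$, I would set $g(nh):=E_{h}^{+}(nh-mh)$ and let $n$ range over $\mathbb{Z}_{+}^{8}$. With this choice the two bulk summands on the left of \eqref{DiscreteStokesFormula_upper_half_lattice} turn into the bulk sum of \eqref{Discrete_Borel_Pompeiu_upper_half_lattice}, while the right-hand boundary term of Stokes' formula turns into the boundary sum over $\underline{n}\in\mathbb{Z}^{7}$.

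The decisive step is to evaluate $\bigl[g(nh)\,D_{h}^{+}\bigr]$ for this $g$. Since shifting the argument converts the forward difference in $n$ into the forward difference in $k:=n-m$, one obtains $\bigl[g(\,\cdot\,)\,D_{h}^{+}\bigr](nh)=\bigl[E_{h}^{+}D_{h}^{+}\bigr]\bigl((n-m)h\bigr)$, namely the \emph{right} action of $D_{h}^{+}$ on the fundamental solution. The point is that this right action again equals the discrete delta. Here the special structure of $E_{h}^{+}$ is essential: by \eqref{Fundamental_solutions} its Fourier symbol is $\widetilde{\xi}_{+}/d^{2}$, a \emph{scalar} multiple of the operator symbol $\widetilde{\xi}_{+}$ because the denominator $d^{2}$ is real and hence central. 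Consequently left multiplication and right multiplication by $\widetilde{\xi}_{+}$ produce one and the same element $\widetilde{\xi}_{+}^{\,2}/d^{2}$, so the right-action symbol coincides with the left-action symbol, which by property (i) of the fundamental solution equals $1$. Thus $\bigl[E_{h}^{+}D_{h}^{+}\bigr]=\delta_{h}$ as well, and—just as for the Stokes' formula itself—no associator obstruction survives. Alternatively, this identity may be checked by a direct telescoping computation identical in structure to the proof of property (i).

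With this identity I would substitute back into \eqref{DiscreteStokesFormula_upper_half_lattice}. The summand carrying $\bigl[g\,D_{h}^{+}\bigr]$ collapses to $\delta_{h}\bigl((n-m)h\bigr)\,f(nh)$; since $\delta_{h}$ is scalar there is no bracketing ambiguity, and summation against $h^{8}$ sifts out the value $f(mh)$ exactly when $m\in\mathbb{Z}_{+}^{8}$ (so that the term $n=m$ lies in the range of summation) and yields $0$ otherwise. The second bulk summand $-\,g\,\bigl[D_{h}^{-}f\bigr]$, after transposition to the opposite side, produces the first ($+$) sum of \eqref{Discrete_Borel_Pompeiu_upper_half_lattice}, and, under the same shift $n\mapsto n-m$, the boundary term of \eqref{DiscreteStokesFormula_upper_half_lattice} supplies the boundary sum there. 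Collecting terms reproduces \eqref{Discrete_Borel_Pompeiu_upper_half_lattice}, the case distinction between $0$ and $f(mh)$ being precisely the question of whether the evaluation point lies in the upper half-lattice.

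The only genuinely technical point to settle is convergence, which also legitimises the reordering of summation. By part (ii) of the earlier theorem one has $E_{h}^{+}\in l^{p}(h\mathbb{Z}^{8},\mathbb{O})$ for $p>\tfrac{8}{7}$, with pointwise decay $\mathcal{O}\!\left(|nh|^{-7}\right)$; together with the summability hypothesis placed on $f$, and hence on $D_{h}^{-}f$, this guarantees absolute convergence of both bulk series. I expect the main obstacle to be exactly the verification that the right action $\bigl[E_{h}^{+}D_{h}^{+}\bigr]$ still yields the \emph{scalar} delta: in the non-associative octonionic setting left and right actions generically differ, and it is only the proportionality of the symbol of $E_{h}^{+}$ to $\widetilde{\xi}_{+}$—with the central real factor $d^{2}$—that forces their coincidence and, at the same time, accounts for the absence of the associator terms familiar from the continuous theory.
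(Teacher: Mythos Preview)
Your approach is essentially the same as the paper's: substitute the shifted fundamental solution $g(\cdot)=E_{h}^{+}(\cdot-mh)$ into the Stokes' formula~\eqref{DiscreteStokesFormula_upper_half_lattice} and use that $[E_{h}^{+}(\cdot-mh)D_{h}^{+}]=\delta_{h}(\cdot-mh)$ sifts out $f(mh)$ or $0$ according to whether $m\in\mathbb{Z}_{+}^{8}$. The paper simply asserts the right-action identity ``by the definition of the discrete fundamental solution,'' whereas you supply a genuine justification via the Fourier symbol $\widetilde{\xi}_{+}/d^{2}$ and the centrality of the real scalar $d^{2}$, and you also address convergence; these additions fill gaps the paper leaves implicit, but the argument is otherwise identical.
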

\begin{proof}
To prove the discrete octonionic Borel-Pompeiu formula for the upper half-lattice, we use at first the discrete octonionic Stokes's formula for the upper half-lattice~(\ref{DiscreteStokesFormula_upper_half_lattice}), and replace therein the function $g$ by the shifted discrete fundamental solution $E_{h}^{+}(\cdot-mh)$ with $m\in\mathbb{Z}_{+}^{8}$. Next, by taking into account the definition of the discrete fundamental solution, we note that $\left[ E_{h}^{+}(nh-mh)D_{h}^{+}\right]=0$ for $n \neq m$ and $\left[ E_{h}^{+}(nh-mh)D_{h}^{+}\right]=h^{-8}$ for $n=m$. Thus, the formula is established.
\end{proof}\par
By using the discrete octonionic Borel-Pompeiu formula~(\ref{Discrete_Borel_Pompeiu_upper_half_lattice}) and requiring that the function $f$ is discrete left backward monogenic in $hZ_{+}^{8}$, we immediately arrive at the discrete octonionic Cauchy formula:
\begin{theorem}\label{Theorem:Discrete_Cauchy_formula_upper}
Let $f$ be a discrete left backward monogenic function with respect to the operator $D_{h}^{-}$, and let $E_{h}^{+}$ be the discrete fundamental solution to the operator $D_{h}^{+}$. Then the discrete octonionic Cauchy formula for the upper half-lattice $hZ_{+}^{8}$ is given by
\begin{equation}
\label{Discrete_Cauchy_formula_upper_half_lattice}
\sum\limits_{\underline{n}\in \mathbb{Z}^{7}} \mathbf{e}_{7}\left(E_{h}^{+}(\underline{n}h-\underline{m}h,1)f_{k}(\underline{m},0)\right) h^{8} = \left\{\begin{array}{cl}
0, & m\notin\mathbb{Z}_{+}^{8}, \\
f(mh), & m\in\mathbb{Z}_{+}^{8},
\end{array} \right.
\end{equation}
which holds for any discrete function $f$ such that the series converge.
\end{theorem}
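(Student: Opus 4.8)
The plan is to obtain the Cauchy formula as an immediate specialisation of the discrete octonionic Borel-Pompeiu formula~(\ref{Discrete_Borel_Pompeiu_upper_half_lattice}). The left-hand side of that formula consists of two contributions: an interior (volume) sum
\[
\sum_{n\in \mathbb{Z}_{+}^{8}}  E_{h}^{+}(nh-mh) \left[ D_h^{-}f(mh) \right] h^8
\]
and a boundary sum over $\underline{n}\in\mathbb{Z}^{7}$. The strategy is simply to switch off the interior sum using the monogenicity hypothesis, leaving the boundary sum alone on the left.

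First I would invoke the assumption that $f$ is discrete left backward monogenic with respect to $D_h^{-}$, that is, $D_h^{-}f(mh)=0$ for every $mh\in h\mathbb{Z}_{+}^{8}$ in the sense of Definition~\ref{Definition:discrete_monogenic}. Then every summand of the interior sum carries the factor $\left[ D_h^{-}f(mh) \right]$, which is evaluated first and equals $0$; multiplying the octonion $E_{h}^{+}(nh-mh)$ by $0$ yields $0$ term-by-term. Crucially, this vanishing is unaffected by the lack of associativity, because the inner bracket is computed before any product with $E_{h}^{+}$ is formed, so no reassociation is required. Hence the entire interior sum collapses to zero, and~(\ref{Discrete_Borel_Pompeiu_upper_half_lattice}) reduces to the boundary sum on the left equalling the right-hand case distinction, which is exactly~(\ref{Discrete_Cauchy_formula_upper_half_lattice}).

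Because the two cases $m\in\mathbb{Z}_{+}^{8}$ and $m\notin\mathbb{Z}_{+}^{8}$ are already encoded in~(\ref{Discrete_Borel_Pompeiu_upper_half_lattice}), both the reproducing property $f(mh)$ in the interior and the vanishing outside are inherited directly, with no further computation. The only genuine point to keep track of is the convergence hypothesis: since~(\ref{Discrete_Borel_Pompeiu_upper_half_lattice}) is stated for functions $f$ for which the relevant series converge, I would carry that same hypothesis into the statement, so that discarding a series of zeros remains legitimate. I do not anticipate a substantial obstacle; the entire content of the result resides in the Borel-Pompeiu formula, and the Cauchy formula is merely its restriction to the discrete backward monogenic class.
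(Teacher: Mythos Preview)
Your proposal is correct and matches the paper's approach exactly: the paper states just before the theorem that ``by using the discrete octonionic Borel--Pompeiu formula~(\ref{Discrete_Borel_Pompeiu_upper_half_lattice}) and requiring that the function $f$ is discrete left backward monogenic in $h\mathbb{Z}_{+}^{8}$, we immediately arrive at the discrete octonionic Cauchy formula,'' which is precisely the specialisation you describe.
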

Analogously we may introduce discrete octonionic Borel-Pompeiu and Cauchy formulae for the lower half-lattice $h\mathbb{Z}_{-}^{+}$:
\begin{corollary}\label{Corollary:Borel_Pompeiu_Cauchy_lower}
Let $f$ be a discrete left backward monogenic function with respect to the operator $D_{h}^{-}$, and let $E_{h}^{+}$ be the discrete fundamental solution to operator $D_{h}^{+}$. Then the discrete octonionic Borel-Pompeiu formula for the lower half-lattice $h\mathbb{Z}_{-}^{8}$ is given by
\begin{equation}
\label{Discrete_Borel_Pompeiu_lower_half_lattice}
\begin{array}{c}
\displaystyle \sum_{n\in \mathbb{Z}_{-}^{8}}  E_{h}^{+}(nh-mh) \left[ D_h^{-}f(mh) \right] h^8 \\
\displaystyle - \sum\limits_{\underline{n}\in \mathbb{Z}^{7}} \mathbf{e}_{7}\left(E_{h}^{+}(\underline{n}h-\underline{m}h,0)f_{k}(\underline{m},-1)\right) h^{8} = \left\{\begin{array}{cl}
0, & m\notin\mathbb{Z}_{-}^{8}, \\
f(mh), & m\in\mathbb{Z}_{-}^{8},
\end{array} \right.
\end{array}  
\end{equation}
for any discrete function $f$ such that the series converge. In the case when $f$ is a discrete left backward monogenic function with respect to the operator $D_{h}^{-}$, we obtain the discrete octonionic Cauchy formula for the lower half-lattice $h\mathbb{Z}_{-}^{8}$ in the form 
\begin{equation}
\label{Discrete_Cauchy_formula_lower_half_lattice}
-\sum\limits_{\underline{n}\in \mathbb{Z}^{7}} \mathbf{e}_{7}\left(E_{h}^{+}(\underline{n}h-\underline{m}h,0)f_{k}(\underline{m},-1)\right) h^{8} = \left\{\begin{array}{cl}
0, & m\notin\mathbb{Z}_{-}^{8}, \\
f(mh), & m\in\mathbb{Z}_{-}^{8}.
\end{array} \right.
\end{equation}
\end{corollary}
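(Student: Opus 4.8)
The plan is to mirror the argument already used for the upper half-lattice, now departing from the discrete Stokes' formula for the lower half-lattice~(\ref{DiscreteStokesFormula_lower_half_lattice}) instead of~(\ref{DiscreteStokesFormula_upper_half_lattice}). Concretely, I would substitute into~(\ref{DiscreteStokesFormula_lower_half_lattice}) the shifted discrete fundamental solution $g(\cdot)=E_h^+(\cdot-mh)$, where now the shift parameter $m$ is the point at which we wish to reproduce $f$, and rename the summation index to $n$. This turns the first bulk term of~(\ref{DiscreteStokesFormula_lower_half_lattice}) into $\sum_{n\in\mathbb{Z}_-^8}\left[E_h^+(nh-mh)D_h^+\right]f(nh)\,h^8$, while the boundary term on the right-hand side takes the shifted form appearing in~(\ref{Discrete_Borel_Pompeiu_lower_half_lattice}).

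The decisive step is the same reproducing property used before: by the defining relation of the fundamental solution, $\left[E_h^+(\cdot-mh)D_h^+\right](nh)=\delta_h(nh-mh)$, which equals $h^{-8}$ when $n=m$ and $0$ otherwise. If $m\in\mathbb{Z}_-^8$, then the point $n=m$ lies inside the summation range and the bulk term collapses to $f(mh)$; if $m\notin\mathbb{Z}_-^8$, the delta is never activated on $\mathbb{Z}_-^8$, so the term vanishes. This produces exactly the case distinction on the right-hand side of~(\ref{Discrete_Borel_Pompeiu_lower_half_lattice}). Rearranging the remaining terms — moving the contribution containing $D_h^-f$ to the appropriate side — yields the Borel-Pompeiu formula. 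The minus sign in front of the boundary sum in~(\ref{Discrete_Borel_Pompeiu_lower_half_lattice}) is inherited directly from the minus sign on the right-hand side of the lower-half Stokes' formula~(\ref{DiscreteStokesFormula_lower_half_lattice}), which is the only structural difference from the upper-half case.

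The Cauchy formula~(\ref{Discrete_Cauchy_formula_lower_half_lattice}) then follows at once by specialising~(\ref{Discrete_Borel_Pompeiu_lower_half_lattice}) to an $f$ that is discrete left backward monogenic with respect to $D_h^-$, i.e. $D_h^-f=0$ throughout $h\mathbb{Z}_-^8$. Under this hypothesis the entire bulk sum $\sum_{n\in\mathbb{Z}_-^8}E_h^+(nh-mh)\left[D_h^-f(mh)\right]h^8$ vanishes identically, and only the boundary sum with its inherited minus sign survives, which is precisely~(\ref{Discrete_Cauchy_formula_lower_half_lattice}).

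I expect no genuine obstacle of principle here, since the computation is structurally identical to the upper-half case; the only points requiring care are the bookkeeping of the sign coming from~(\ref{DiscreteStokesFormula_lower_half_lattice}) and the admissibility of the substitution $g=E_h^+(\cdot-mh)$. The latter is justified by the summability $E_h^+\in l^p(h\mathbb{Z}^8,\mathbb{O})$ for $p>\tfrac{8}{7}$ established above, together with the standing assumption that $f$ decays fast enough for all displayed series to converge; these two facts guarantee that both the bulk and boundary sums are absolutely convergent and that the termwise manipulation is legitimate.
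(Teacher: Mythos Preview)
Your proposal is correct and follows essentially the same approach as the paper: substitute $g=E_h^{+}(\cdot-mh)$ into the lower-half Stokes' formula~(\ref{DiscreteStokesFormula_lower_half_lattice}), use the reproducing property $[E_h^{+}(\cdot-mh)D_h^{+}](nh)=\delta_h(nh-mh)$ to obtain the case distinction, and then specialise to $D_h^{-}f=0$ for the Cauchy formula. The paper in fact states the corollary without proof, simply noting that it is obtained analogously to the upper-half case; your write-up supplies exactly those analogous details, including the correct tracking of the inherited minus sign.
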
\par
Theorem~\ref{Theorem:Discrete_Cauchy_formula_upper} and Corollary~\ref{Corollary:Borel_Pompeiu_Cauchy_lower} immediately lead to the definition of discrete octonionic Cauchy transforms for the upper and lower half-lattices:
\begin{definition}
For a discrete $l^p$-function $f$, $1\leq p<+\infty$, defined on the boundary layers $(\underline{m},0)$ and $(\underline{m},1)$ the discrete octonionic Cauchy transform for the upper half-lattice $h\mathbb{Z}_{+}^{8}$ is defined by
\begin{equation}\label{Cauchy_transforms_upper}
\mathcal{C}_{\mathbb{O}}^{+}[f](mh) := \sum\limits_{\underline{n}\in \mathbb{Z}^{7}} \mathbf{e}_{7}\left(E_{h}^{+}(\underline{n}h-\underline{m}h,1)f_{k}(\underline{m},0)\right) h^{8}.
\end{equation}
Analogously, for a discrete $l^p$-function $f$, $1\leq p<+\infty$, defined on the boundary layers $(\underline{m},0)$ and $(\underline{m},-1)$ the discrete octonionic Cauchy transform for the lower half-lattice $h\mathbb{Z}_{-}^{8}$ is defined by
\begin{equation}\label{Cauchy_transforms_lower}
\mathcal{C}_{\mathbb{O}}^{-}[f](mh) := -\sum\limits_{\underline{n}\in \mathbb{Z}^{7}} \mathbf{e}_{7}\left(E_{h}^{+}(\underline{n}h-\underline{m}h,0)f_{k}(\underline{m},-1)\right) h^{8}.
\end{equation}
\end{definition}
It is worth to mention, that alike in the continuous case also in discrete Clifford analysis, the discrete octonionic Cauchy formulae and Cauchy transform indicate the dependence of (discrete) monogenic functions on their boundary values. However, as a consequence of the lack of associativity, the signs in the discrete octonionic Borel-Pompeiu and Cauchy formulae~(\ref{Discrete_Borel_Pompeiu_upper_half_lattice})-(\ref{Discrete_Cauchy_formula_lower_half_lattice}) differ from the results in discrete Clifford analysis, compare for example with \cite{CKKS}. Additionally, in line with the discussion in \cite{Krausshar_1}, the discretisation of the continuous octonionic analysis has the interesting effect that the associator disappears from the constructions.\par
Let us now present some properties of the discrete octonionic Cauchy transforms:
\begin{theorem}
Let us consider the discrete upper half-lattice $h\mathbb{Z}_{+}^{8}$ and the lower half-lattice $h\mathbb{Z}_{-}^{8}$.  Then the discrete Cauchy transforms~(\ref{Cauchy_transforms_upper})-(\ref{Cauchy_transforms_lower}) satisfy the following properties:
\begin{itemize}
\item[(i)] The interior and exterior Cauchy transforms have the following mapping properties:
\begin{equation*}
\begin{array}{ll}
\displaystyle \mathcal{C}_{\mathbb{O}}^{+} \colon l^{p}(h\mathbb{Z}_{+}^{8},\mathbb{O}) \to l^{q}(h\mathbb{Z}_{+}^{8},\mathbb{O}), & 1 \leq p,q \leq \infty, \\
\displaystyle \mathcal{C}_{\mathbb{O}}^{-} \colon l^{p}(h\mathbb{Z}_{-}^{8},\mathbb{O}) \to l^{q}(h\mathbb{Z}_{-}^{8},\mathbb{O}), & 1 \leq p < \infty, \frac{8}{7} < q < \infty. \\
\end{array}
\end{equation*}
\item[(ii)] $D_{h}^{+} \mathcal{C}_{\mathbb{O}}^{+}[f](mh)=0$, $\forall\, m=(\underline{m},m_{7})$ with $m_{7}\geq 1$.
\item[(iii)] $D_{h}^{+} \mathcal{C}_{\mathbb{O}}^{-}[f](mh)=0$, $\forall\, m=(\underline{m},m_{7})$ with $m_{7}\leq -1$.
\end{itemize}
\end{theorem}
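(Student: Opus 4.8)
The plan is to handle the three assertions by two separate mechanisms: the mapping property (i) is a boundedness statement that I would obtain from the convolution structure of the transforms together with the decay of the discrete fundamental solution, whereas the monogenicity assertions (ii) and (iii) follow from the defining equation $D_{h}^{+}E_{h}^{+}=\delta_{h}$ combined with a geometric separation argument.

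For (i), I would first observe that, for each fixed value of the normal coordinate, both Cauchy transforms are discrete convolutions in the seven tangential variables $\underline{m}$ of the boundary datum $f$ against a kernel obtained by restricting $E_{h}^{+}$ to the corresponding layer. The boundedness $l^{p}\to l^{q}$ then reduces to the discrete Young inequality $\|K\ast f\|_{q}\leq \|K\|_{r}\|f\|_{p}$ with $1+\frac{1}{q}=\frac{1}{p}+\frac{1}{r}$, so that everything comes down to controlling the $l^{r}$-norm of the restricted kernel. These norms I would estimate from the decay $E_{h}^{+}(mh)=\mathcal{O}(|mh|^{-7})$ and the membership $E_{h}^{+}\in l^{p}(h\mathbb{Z}^{8},\mathbb{O})$, $p>\frac{8}{7}$, both established above. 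The difference between the two admissible ranges reflects the geometric difference between the two kernels: the kernel of $\mathcal{C}_{\mathbb{O}}^{+}$ is evaluated on the layer lying at nonzero normal distance from the boundary, i.e. away from the singularity of $E_{h}^{+}$, so it is bounded and rapidly decaying and the full range is available; the kernel of $\mathcal{C}_{\mathbb{O}}^{-}$ is evaluated on the boundary layer itself (normal coordinate zero), so it still feels the near-singular behaviour of $E_{h}^{+}$ and only inherits the weaker threshold $\frac{8}{7}<q<\infty$.

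For (ii) and (iii), I would apply $D_{h}^{+}$ in the evaluation variable $mh$ directly to the series defining the transform. The only $mh$-dependent factor is the shifted fundamental solution, and since the finite differences act only on its real components, the computation can be rearranged componentwise in exactly the manner used to prove the discrete Stokes formula above. The decisive point is that this rearrangement produces no associator terms, which is the very associator-free phenomenon already exhibited for the discrete Stokes and Borel--Pompeiu formulae; consequently the expression collapses to the action of $D_{h}^{+}E_{h}^{+}=\delta_{h}$ on the shifted arguments. For $m=(\underline{m},m_{7})$ with $m_{7}\geq 1$ the normal component of the argument of $E_{h}^{+}$ never vanishes, so $\delta_{h}$ is never triggered and the whole sum is zero; the case $m_{7}\leq -1$ for $\mathcal{C}_{\mathbb{O}}^{-}$ is identical up to the sign convention of the lower half-lattice.

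The step I expect to be the main obstacle is precisely the associator bookkeeping in (ii) and (iii): because octonionic multiplication is non-associative, one cannot simply move $D_{h}^{+}$ past the fixed factors $\mathbf{e}_{7}$ and $f$ and invoke $D_{h}^{+}E_{h}^{+}=\delta_{h}$ pointwise. The vanishing of the associator is not a pointwise algebraic identity but a consequence of summing the forward differences against the whole basis, so the careful componentwise expansion, tracking the products $(\mathbf{e}_{i}\mathbf{e}_{j})\mathbf{e}_{k}=-\mathbf{e}_{i}(\mathbf{e}_{j}\mathbf{e}_{k})$ across the entire sum, is where the genuine work lies. A secondary technical point is the justification of interchanging $D_{h}^{+}$ with the infinite summation and of the convergence of the convolutions, for which the $l^{p}$-estimates from part (i) are exactly what is needed.
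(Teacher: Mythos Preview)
Your proposal is correct and follows the same outline as the paper's own proof, which is extremely terse: for (i) the paper invokes H\"older's inequality together with the $l^{p}$-properties of $E_{h}^{+}$ (your Young-inequality convolution argument is the natural unpacking of this), and for (ii)--(iii) the paper simply says one applies $D_{h}^{+}$ directly and uses its defining property. Your explicit discussion of the associator bookkeeping in (ii)--(iii) is more careful than the paper, which dismisses this step as ``straightforward'' without comment; your observation that the componentwise expansion in the spirit of the discrete Stokes computation is what makes this go through is a genuine addition rather than a deviation.
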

\begin{proof}
The proof of this theorem can be performed along the same ideas as in \cite{CKKS,Cerejeiras}, and, therefore, we will only mention that the first statement follows from a direct application of H\"older's inequality and the properties of the discrete fundamental solution $E_{h}^{+}$. The proof of (ii) and (iii) is done by straightforward application of the discrete forward Cauchy-Riemann operator $D_{h}^{+}$ and applying its properties.
\end{proof}\par

\section{Discrete octonionic Hardy spaces}

The aim of this section is to construct discrete octonionic Hardy spaces. Looking at the results in discrete Clifford analysis related to Hardy spaces, see for example \cite{CKKS,Cerejeiras_2,Cerejeiras}, it becomes evident that a typical approach to introduce discrete Hardy spaces will be to work with Fourier transforms on boundary layers of the discrete fundamental solution. In this way, discrete Riesz kernels can be defined, and, hence, discrete Plemelj (or Hardy) projections are then introduced. Alternatively, one could follow the classical continuous approach for defining Hardy spaces and adapt it to the discrete setting. For the purpose of a better understanding the effect of non-associativity, both approaches will be presented in this section.\par
At first, we recall from \cite{CKS2023} the following definition of an octonionic Hilbert space, which straightforwardly extends to the discrete setting:
\begin{definition}\label{Definition:Hilbert_space}
An octonionic Hilbert space $H$ is a left $\mathbb{O}$-module with an octonion-valued inner product $\langle \cdot,\cdot\rangle\colon H\times H\to\mathbb{O}$ such that $\left(H,\langle \cdot,\cdot\rangle_{0}\right)$ is a real Hilbert space, where $\langle \cdot,\cdot\rangle_{0}:= \mathrm{Re}\langle \cdot,\cdot\rangle$. The octonion-valued inner product is supposed to satisfy for all $f,g,h\in H$ and for all $\alpha\in\mathbb{O}$ the following rules:
\begin{itemize}
\item[(i)] Additivity: $\langle f+g,h\rangle = \langle f,h \rangle + \langle g,h \rangle$;
\item[(ii)] Hermitian property: $\langle g,f \rangle = \overline{\langle f,g \rangle}$;
\item[(iii)] Strict positivity: $\langle f,f \rangle \in \mathbb{R}^{\geq 0}$ and $\langle f,f \rangle = 0$ iff $f=0$;
\item[(iv)] $\mathbb{R}$-homogeneity: $\langle fr,g \rangle = \langle f,g \rangle r$ for all $r\in\mathbb{R}$;
\item[(v)] $\mathbb{O}$-homogeneity: $\langle f\alpha,f \rangle = \langle f,f \rangle\alpha$;
\item[(vi)] $\mathbb{O}$-para-linearity: $\langle f\alpha,g\rangle_{0} = \mathrm{Re}\left(\langle f\alpha,g\rangle\right) = \mathrm{Re}\left(\langle f,g\rangle\alpha\right)$.
\end{itemize}
\end{definition}\par
Before introducing the notion of a discrete octonionic Hardy space, let us fix the notation $\gamma_{h}$ for the discrete boundary of $\Omega_{h}$ (0-layer in the case of half-lattices). Now we introduce the following notion of a discrete octonionic Hardy space, which is adapted from the definition presented in \cite{CKS2023}:
\begin{definition}\label{Definition:discrete_Hardy_space}
The discrete octonionic Hardy space $\mathcal{H}_{h}^{2}(\Omega_{h},\mathbb{O})$ is the closure of the set of $l^{2}(\gamma_{h},\mathbb{O})$ functions that are discrete left monogenic inside of $\Omega_{h}$ and extendable to the boundary $\gamma_{h}$.
\end{definition}
It is important to remark, that we have not distinguished in this definition between discrete forward and backward monogenic functions as it has been introduced in Definition~\ref{Definition:discrete_monogenic}. The reason for this is to avoid too many specific definitions, which do not generally change the notion of a discrete octonionic Hardy space, since both types of discrete monogenic functions can be used. Further, Definition~\ref{Definition:discrete_Hardy_space} requires a discrete monogenic function being extendable to the boundary $\gamma_{h}$, implying that an extension operator must be defined. Although there are several possibilities to define such an operator, arguably the most straightforward way is to work with the Fourier transform of the discrete fundamental solution $E_{h}^{+}$, as it is done in the discrete Clifford analysis, see again \cite{CKKS,Cerejeiras_2,Cerejeiras}. Hence, we will introduce an extension operator based on this idea, and thus, establish a clear connection between the discrete Clifford analysis approach and the \textquotedblleft continuous theory-based\textquotedblright\, approach.\par
Next, we need to introduce an octonion-valued inner product on the discrete boundary $\gamma_{h}$:
\begin{definition}
For discrete octonionic function $f,g\in l^{2}(\gamma_{h})$, we introduce the following $\mathbb{O}$-valued inner products:
\begin{equation}
\label{Inner_product_upper_half}
\langle f,g\rangle_{\gamma_{h}}^{h\mathbb{Z}_{+}^{8}} := \sum_{mh\in \gamma_{h}} \overline{\left(-\mathbf{e}_{7}g(mh)\right)}\left(-\mathbf{e}_{7}f(mh)\right)h^{2},
\end{equation}
if the upper half-lattice is considered, and
\begin{equation}
\label{Inner_product_lower_half}
\langle f,g\rangle_{\gamma_{h}}^{h\mathbb{Z}_{-}^{8}} := \sum_{mh\in \gamma_{h}} \overline{\left(\mathbf{e}_{7}g(mh)\right)}\left(\mathbf{e}_{7}f(mh)\right)h^{2},
\end{equation}
if the lower half-lattice is considered.
\end{definition}
It is then easy to verify the following statement:
\begin{proposition}
The sets $\left(\mathcal{H}_{h}^{2}(\gamma_{h},\mathbb{O}),\langle \cdot,\cdot\rangle_{\gamma_{h}}^{h\mathbb{Z}_{+}^{8}}\right)$ and $\left(\mathcal{H}_{h}^{2}(\gamma_{h},\mathbb{O}),\langle \cdot,\cdot\rangle_{\gamma_{h}}^{h\mathbb{Z}_{-}^{8}}\right)$ are octonionic Hilbert spaces in the sense of Definition~\ref{Definition:Hilbert_space}.
\end{proposition}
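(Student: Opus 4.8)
The plan is to check, one by one, the six axioms of Definition~\ref{Definition:Hilbert_space} for the $\mathbb{O}$-valued product and then to establish completeness of the induced real inner product $\langle\cdot,\cdot\rangle_{0}=\mathrm{Re}\langle\cdot,\cdot\rangle$. Before that, I would remove the apparent difference between the two products: since $\overline{\mathbf{e}_{7}}=-\mathbf{e}_{7}$ and $\overline{ab}=\overline{b}\,\overline{a}$, every summand of the upper product obeys $\overline{(-\mathbf{e}_{7}g)}(-\mathbf{e}_{7}f)=\overline{\mathbf{e}_{7}g}\,(\mathbf{e}_{7}f)$, which is exactly the summand of the lower product. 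Hence the sign in front of $\mathbf{e}_{7}$ is irrelevant, and it suffices to treat the single bilinear form $\langle f,g\rangle=\sum_{mh\in\gamma_{h}}\overline{\mathbf{e}_{7}g(mh)}\,(\mathbf{e}_{7}f(mh))\,h^{2}$, i.e.\ the standard octonionic $l^{2}$-product precomposed with the pointwise normal twist $f\mapsto\mathbf{e}_{7}f$.

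The routine axioms I would dispatch first. Additivity (i) and $\mathbb{R}$-homogeneity (iv) follow from distributivity of the octonionic product and from the fact that real scalars and the factor $h^{2}$ lie in the nucleus, so they associate and commute with everything. The Hermitian property (ii) is immediate from $\overline{ab}=\overline{b}\,\overline{a}$ and $\overline{\overline{a}}=a$, the real factor $h^{2}$ being inert. Strict positivity (iii) uses only that $\mathbb{O}$ is a normed division algebra: each summand equals $\overline{\mathbf{e}_{7}f}\,(\mathbf{e}_{7}f)=|\mathbf{e}_{7}f|^{2}=|f|^{2}$, so that $\langle f,f\rangle=\sum_{mh}|f(mh)|^{2}h^{2}\geq 0$, vanishing exactly when $f\equiv 0$. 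This same computation shows that $\langle\cdot,\cdot\rangle_{0}$ induces precisely the ambient $l^{2}(\gamma_{h},\mathbb{O})$-norm; since by Definition~\ref{Definition:discrete_Hardy_space} the Hardy space is a closure, it is a closed subspace of the real Hilbert space $l^{2}(\gamma_{h},\mathbb{O})\cong l^{2}(\gamma_{h},\mathbb{R})^{8}$ and therefore complete, so $(\mathcal{H}_{h}^{2},\langle\cdot,\cdot\rangle_{0})$ is a real Hilbert space.

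The substance of the proposition lies in the octonionic homogeneity axioms (v) and (vi), which is exactly where non-associativity meets the normal twist. After the reduction these become pointwise statements in $\mathbb{O}$: for (v) one must show that $\langle f\alpha,f\rangle$ reproduces $\langle f,f\rangle\alpha$, and for (vi) that the real part of the summands of $\langle f\alpha,g\rangle$ equals that of $\langle f,g\rangle\alpha$. I would attack both by writing each conjugate as $\overline{a}=2\,\mathrm{Re}(a)-a$, splitting off the nuclear real part cleanly, and then collapsing the remaining triple products of imaginary units with the flexible law $[a,b,a]=0$, the Moufang identities, Artin's theorem (any two octonions generate an associative subalgebra), and $\mathbf{e}_{7}^{2}=-1$. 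For (vi) the additional structural input is the associativity and symmetry of the real trace form, $\mathrm{Re}((ab)c)=\mathrm{Re}(a(bc))$ and $\mathrm{Re}(ab)=\mathrm{Re}(ba)$, which permit re-bracketing and the free transport of $\alpha$ once the real part has been taken.

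I expect the verification of (v) and (vi) to be the main obstacle. For the untwisted $l^{2}$-product the relation $\overline{a}(a\alpha)=(\overline{a}a)\alpha$ is a one-line consequence of alternativity, but the boundary normal $\mathbf{e}_{7}$ does not commute with the $\mathbb{O}$-action modulo the associator, so here the required cancellations are genuine octonionic identities that must be verified by hand, with careful attention to the side on which $\mathbb{O}$ is allowed to act and to the monogenicity constraint defining $\mathcal{H}_{h}^{2}$ (recall that monogenic functions need not be closed under the $\mathbb{O}$-action). This associator-sensitivity is precisely the phenomenon that the paper stresses distinguishes the octonionic theory from the associative Clifford setting; once it is resolved, the remaining assertions are bookkeeping.
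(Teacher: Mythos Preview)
The paper supplies no proof of this proposition at all; it is prefaced only by the sentence ``It is then easy to verify the following statement.'' So there is nothing in the paper to compare your argument against beyond that bare assertion. Your plan is considerably more thorough: the reduction showing that the $\pm\mathbf{e}_{7}$ twists give the same form is correct, and your handling of axioms (i)--(iv), of (vi) via $\mathrm{Re}\bigl((ab)c\bigr)=\mathrm{Re}\bigl(a(bc)\bigr)$, and of completeness via closure in $l^{2}$, is exactly what one would write out.

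You are also right that axiom (v) and the $\mathbb{O}$-module requirement are where the content sits, and your caution there is warranted rather than excessive. In fact the pointwise identity your plan would need for (v), namely $\overline{\mathbf{e}_{7}a}\bigl(\mathbf{e}_{7}(a\alpha)\bigr)=|a|^{2}\alpha$, fails in general: for $a=\mathbf{e}_{1}$, $\alpha=\mathbf{e}_{2}$ one gets $\mathbf{e}_{6}\mathbf{e}_{3}=-\mathbf{e}_{2}\neq\mathbf{e}_{2}$, and replacing $a\alpha$ by the left action $\alpha a$ does not rescue it either (take $a=\mathbf{e}_{3}$, $\alpha=\mathbf{e}_{4}$, where the left-hand side becomes $-\mathbf{e}_{4}$). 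Hence the Moufang/Artin route cannot establish (v) term-by-term with the $\mathbf{e}_{7}$-twist; a valid argument would have to use the Hardy constraint globally, or the inner product/axiom pairing as written in the paper needs adjustment. Your further remark that closure of $\mathcal{H}_{h}^{2}$ under the $\mathbb{O}$-action is itself unclear---the paper explicitly states that octonionic monogenic functions do not form an $\mathbb{O}$-module---is a second genuine point the paper simply does not address. In short, you have located precisely the places where ``easy to verify'' glosses over nontrivial octonionic issues.
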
\par
The next step is to establish the theory of discrete Hardy spaces in the sense of discrete Clifford analysis. This requires a definition of discrete Riesz kernels (convolution kernels) implying that the behaviour of the discrete fundamental solution $E_{h}^{+}$ on boundary layers needs to be studied. Let us recall the integral representation of the discrete fundamental solution
\begin{equation*}
E_{h}^{+}(mh)=\frac{1}{(2\pi)^{8}}\int\limits_{\xi\in[-\frac{\pi}{h},\frac{\pi}{h}]^{8}} \frac{\tilde{\xi}_{+}}{d^{2}}e^{-i\langle mh,\xi \rangle}d\xi, \quad m\in\mathbb{Z}^{8}
\end{equation*}
with $\widetilde{\xi}_{+} = \sum\limits_{j=0}^{7}\mathbf{e}_{j} \xi_{h}^{+j}$. To study Fourier symbols on the boundary layers, i.e. for $m_{7}\in\left\{-1,0,1\right\}$, we apply the $7$--dimensional discrete Fourier transform to the discrete fundamental solution:
\begin{equation*}
\begin{array}{c}
\displaystyle \mathcal{F}_h^{(7)} E_{h}^{+}(\underline \eta, m_{7}h) = \sum_{\underline mh \in h\mathbb{Z}^{7}} e^{-i h \langle \underline{m}, \underline \eta \rangle }  \left[ \frac{1}{(2\pi)^8} \int\limits_{\left[-\frac{\pi}{h},\frac{\pi}{h}\right]^{7} } e^{-i h \langle m, \xi \rangle } \frac{\widetilde{\xi}_{+}}{d^2} d\xi \right] \\
\displaystyle =  \frac{1}{(2\pi)^{7}}    \int\limits_{\left[-\frac{\pi}{h}, \frac{\pi}{h} \right]^{7}}     \sum_{\underline mh \in h\mathbb{Z}^{7}} e^{-i h \langle \underline m, \underline \eta - \underline \xi \rangle } \underbrace{\left[\frac{1}{2\pi}  \int\limits_{-\frac{\pi}{h}}^{\frac{\pi}{h}} e^{-ih m_{7}\xi_{7} } \frac{\widetilde{\xi}_{+}}{d^2} d\xi_{7}\right]}_{(I)}  d\underline \xi.
\end{array}
\end{equation*}
Let us study now the integral $(I)$:
\begin{equation*}
\begin{array}{lcl}
(I) & = & \displaystyle \frac{1}{2\pi}  \int\limits_{-\frac{\pi}{h}}^{\frac{\pi}{h}} e^{-ih m_{7}\xi_{7} } \frac{\widetilde{\xi}_{+}}{d^2} d\xi_{7} = \frac{1}{2\pi}  \int\limits_{-\frac{\pi}{h}}^{\frac{\pi}{h}} e^{-ih m_{7}\xi_{7} } \frac{\widetilde{\underline{\xi}}_{+}+\widetilde{\xi}_{+,7}}{\underline{d}^2 + \frac{4}{h^{2}}\sin^{2}\left(\frac{\xi_{7}h}{2}\right) } d\xi_{7} \\
 & = & \displaystyle \frac{1}{2\pi}  \int\limits_{-\frac{\pi}{h}}^{\frac{\pi}{h}} e^{-ih m_{7}\xi_{7} } \frac{\widetilde{\underline{\xi}}_{+}-\mathbf{e}_{7}\frac{1}{h}\left(1-e^{-ih\xi_{7}}\right)}{\underline{d}^2 + \frac{4}{h^{2}}\sin^{2}\left(\frac{\xi_{7}h}{2}\right) } d\xi_{7} \\
 & = & \displaystyle \frac{\widetilde{\underline{\xi}}_{+}}{2\pi}  \int\limits_{-\frac{\pi}{h}}^{\frac{\pi}{h}}  \frac{e^{-ih m_{7}\xi_{7} }}{\underline{d}^2 + \frac{4}{h^{2}}\sin^{2}\left(\frac{\xi_{7}h}{2}\right) } d\xi_{7} - \frac{\mathbf{e}_{7}}{2\pi h}  \int\limits_{-\frac{\pi}{h}}^{\frac{\pi}{h}} e^{-ih m_{7}\xi_{7} } \frac{1-e^{-ih\xi_{7}}}{\underline{d}^2 + \frac{4}{h^{2}}\sin^{2}\left(\frac{\xi_{7}h}{2}\right) } d\xi_{7} \\
 & = & \displaystyle \frac{\widetilde{\underline{\xi}}_{+}}{2\pi}  \int\limits_{-\frac{\pi}{h}}^{\frac{\pi}{h}}  \frac{e^{-ih m_{7}\xi_{7} }}{\underline{d}^2 + \frac{4}{h^{2}}\sin^{2}\left(\frac{\xi_{7}h}{2}\right) } d\xi_{7} - \frac{\mathbf{e}_{7}}{2\pi h}  \int\limits_{-\frac{\pi}{h}}^{\frac{\pi}{h}}  \frac{e^{-ih m_{7}\xi_{7} }}{\underline{d}^2 + \frac{4}{h^{2}}\sin^{2}\left(\frac{\xi_{7}h}{2}\right) } d\xi_{7} \\
& & \displaystyle + \frac{\mathbf{e}_{7}}{2\pi h}  \int\limits_{-\frac{\pi}{h}}^{\frac{\pi}{h}} \frac{e^{-ih\xi_{7}(m_{7}+1)}}{\underline{d}^2 + \frac{4}{h^{2}}\sin^{2}\left(\frac{\xi_{7}h}{2}\right) } d\xi_{7}.
\end{array}
\end{equation*}
The integrals above need to be calculated for $m_{7}\in\left\{-1,0,1\right\}$, and these calculations have been presented in \cite{CKKS} for the case of splitting of basis unit elements. Hence, the Fourier symbols of the discrete fundamental solutions on the layers $m_{7}=-1$, $m_{7}=0$, and $m_{7}=1$ are given by
\begin{equation*}
\begin{array}{rcl}
\displaystyle \mathcal{F}_h^{(7)} E_{h}^{+}(\underline \xi, 0) & = & \displaystyle \frac{\widetilde{\underline{\xi}}_{+}}{\underline{d}\sqrt{4+h^{2}\underline{d}^{2}}} - \mathbf{e}_{7}\left(\frac{1}{2}-\frac{h\underline{d}}{2\sqrt{4+h^{2}\underline{d}^{2}}}\right), \\
\displaystyle \mathcal{F}_h^{(7)} E_{h}^{+}(\underline \xi, h) & = & \displaystyle \frac{\widetilde{\underline{\xi}}_{+}}{\underline{d}}\left(\frac{2+h^{2}\underline{d}^{2}}{2\sqrt{4+h^{2}\underline{d}^{2}}}-\frac{h\underline{d}}{2}\right) + \mathbf{e}_{7}\left(-\frac{3h\underline{d}+h^{3}\underline{d}^{3}}{2\sqrt{4+h^{2}\underline{d}^{2}}}+\frac{h^{2}\underline{d}^{2}+1}{2}\right), \\
\displaystyle \mathcal{F}_h^{(7)} E_{h}^{+}(\underline \xi, -h) & = & \displaystyle \frac{\widetilde{\underline{\xi}}_{+}}{\underline{d}}\left(\frac{2+h^{2}\underline{d}^{2}}{2\sqrt{4+h^{2}\underline{d}^{2}}}-\frac{h\underline{d}}{2}\right) - \mathbf{e}_{7}\left(-\frac{3h\underline{d}+h^{3}\underline{d}^{3}}{2\sqrt{4+h^{2}\underline{d}^{2}}}+\frac{h^{2}\underline{d}^{2}+1}{2}\right). \\
\end{array}
\end{equation*}\par
Next, following ideas in \cite{CKKS}, we introduce the pair of operators:
\begin{equation*}
\begin{array}{rcl}
H_{+}f & := & \displaystyle \mathcal{F}_{h}^{-1}\left[\mathbf{e}_{7}\frac{\widetilde{\underline{\xi}}_{+}}{\underline{d}}\frac{2}{h\underline{d}-\sqrt{4+h^{2}\underline{d}^{2}}}\right]\mathcal{F}_{h}f, \\
H_{-}f & := & \displaystyle -\mathcal{F}_{h}^{-1}\left[\mathbf{e}_{7}\frac{\widetilde{\underline{\xi}}_{+}}{\underline{d}}\frac{h\underline{d}-\sqrt{4+h^{2}\underline{d}^{2}}}{2}\right]\mathcal{F}_{h}f,
\end{array}
\end{equation*}
which fulfil the condition $(H_+)^2 = (H_- )^2=I$. By help of these operators, we can formulate conditions for a function to be a discrete boundary value of a discrete octonionic monogenic function in $h\mathbb{Z}_{+}^{8}$ or $h\mathbb{Z}_{-}^{8}$:
\begin{equation*}
\begin{array}{lcl}
f(mh) & = & H_+f(mh), \mbox{ for } m_{7}=1, \\
f(mh) & = & H_-f(mh), \mbox{ for } m_{7}=-1.
\end{array}
\end{equation*}
This enables us now to introduce another definition of discrete octonionic Hardy spaces:
\begin{definition}
The space of discrete functions $f\in l^{p}(h\mathbb{Z}_{+}^{8},\mathbb{O})$ whose discrete 2D-Fourier transform fulfills $f=  H_+f$ for $m_{7}=1$ is called the {\itshape upper discrete octonionic Hardy space} and it is denoted by $h_{p,h\mathbb{Z}_{+}^{8}}^{+}$. Analogously, the space of discrete functions $f\in l^{p}(h\mathbb{Z}_{-}^{8},\mathbb{O})$ whose discrete 2D-Fourier transform fulfills $f=  H_-f$ for $m_{7}=-1$ is called the {\itshape lower discrete octonionic Hardy space} and it is denoted by $h_{p,h\mathbb{Z}_{-}^{8}}^{-}$.
\end{definition}\par
By means of the operators $H_{+}$ and $H_{-}$, the discrete Plemelj or Hardy projections can be now introduced as follows
\begin{equation*}
P_+=\frac{1}{2}\left(I+ H_+ \right) \mbox{ and } P_-=\frac{1}{2}\left(I+H_-\right).
\end{equation*}
Additionally, combining these projections with the previous definition we get
\begin{equation*}
f\in h_{p,h\mathbb{Z}_{+}^{8}}^{+} \Longleftrightarrow P_{+}f=f, \mbox{ and } f\in h_{p,h\mathbb{Z}_{-}^{8}}^{-} \Longleftrightarrow P_{-}f=f.
\end{equation*}\par
Finally, we define two extension operators, which extend a discrete function from layers $m_{7}=-1$ and $m_{7}=1$ to the boundary layer $m_{7}=0$, see \cite{CKK15} for the details:
\begin{definition}
The upper extension operator, denoted as $\mathcal{A}_{+}$, is an operator extending a function given on the boundary layer $m_{7}=1$ to the boundary layer $m_{7}=0$, i.e. it is a mapping $\mathcal{A}_{+} \colon l^{p}(h\mathbb{Z}^{7}) \to l^{p}(h\mathbb{Z}^{7})$ given by
\begin{equation*}
\mathcal{A}_{+} := \mathcal{F}_h^{(n-1)} \left[ \frac{ \widetilde{\underline{\xi}}_{+}}{\underline{d}} \left( \dfrac{2}{\sqrt{4+h^{2} \underline{d}^{2}} - h \underline{d}} \right) \right].
\end{equation*}
Similarly, the lower extension operator, denoted by $\mathcal{A}_{-}$, is an operator extending a function given on the boundary layer $m_{7}=-1$ to the boundary layer $m_{7}=0$, i.e. it is a mapping $\mathcal{A}_{-}\colon l^{p}(h\mathbb{Z}^{7}) \to l^{p}(h\mathbb{Z}^{7})$ given by
\begin{equation*}
\mathcal{A}_{-} := \mathcal{F}_h^{(n-1)} \left[ \left( \frac{\sqrt{4+h^{2}\underline{d}^{2}} + h\underline{d}}{\sqrt{4+h^{2} \underline{d}^{2}} - h \underline{d}} \right) \right].
\end{equation*}
\end{definition}
This definition rounds off the discussion around discrete octonionic Hardy spaces, which we have initiated in Definition~\ref{Definition:discrete_Hardy_space}. Now it is clear how an extension of a discrete function to the boundary layer $m_{7}=0$ can be explained.\par

\section{Summary}

In this paper, we have continued the development of discrete octonionic analysis by introducing discrete Borel-Pompeiu and Cauchy formulae, and defined discrete octonionic Hardy spaces for half-spaces. Moreover, we have discussed two approaches for constructing discrete Hardy spaces: (i) by a direct discretisation of the continuous case; and (ii) by using an approach used in discrete Clifford analysis by studying Fourier symbols of the discrete fundamental solution of the discrete Cauchy-Riemann operator on the boundary layers. Both approaches complement each other and contribute to a better understanding of the discrete octonionic setting.\par
A very surprising result is that the associator, which appears in the continuous case, does not appear in the discrete setting. For a better understanding of this effect, the discrete Stokes' formula for the whole space has been proved  also for the central discrete Cauchy-Riemann operator. Nevertheless, the associator does not appear even in that case, which underlines the particularity of the constructions in the discrete setting.\par
The results presented in this paper provide us with a powerful basic toolkit for a further development of discrete octonionic analysis. In particular, the discrete versions of octonionic Hardy spaces that we introduced allow us to study concrete boundary value problems for monogenic functions in the discrete octonionic setting. This also opens the door to study Calderon-Zygmund type operators in this context. Furthermore, the discrete toolkit allow us us develop numerical algorithms for solving some particular physical problems arising in the unification of particle physics and gravity as illustrated in a series of recent works in this direction, see again \cite{Burdik,NSM}, such as already mentioned roughly in the introductory text.   

Further, after understanding the difficulties arising on the way discretising octonionic analysis for the half-space settings, the results of this paper can subsequentially be extended to the case of considering bounded domains in $\mathbb{R}^{8}$.\par

\medskip\par

{\bf Data Availability Statement}. Data sharing not applicable to this article as no datasets were generated or analysed during the current study.



%
%



\end{document}